\documentclass[12pt]{amsart}

\usepackage[a4paper,margin=3cm]{geometry}
\usepackage[T1]{fontenc}
\usepackage[utf8]{inputenc}
\usepackage{lmodern}
\usepackage{amsmath,amssymb,amsthm}
\usepackage{mathrsfs}
\usepackage{hyperref}

\newcommand{\Z}{\mathbb{Z}}

\newcommand{\PP}{\mathcal{P}}
\newcommand{\C}{\mathbb{C}}

\newtheorem{lem}{Lemma}
\newtheorem{thm}{Theorem}
\newtheorem{prop}{Proposition}
\newtheorem{cor}{Corollary}

\theoremstyle{definition}
\newtheorem{defn}{Definition}
\newtheorem{rem}{Remark}
\newtheorem{exa}{Example}

\title{Generating functions for compositions with constrained even parts}

\author{Mahdi Koutchoukali}
\address{Aix-Marseille Univ, CNRS, Institut de Mathématiques de Marseille,
	Marseille, France}

\email{koutchoukali.mahdi@hotmail.fr}

\date{\today}

\begin{document}
	
	\maketitle
	\begin{abstract}
		We study compositions of a positive integer $n$ in which the occurrence of even parts
		larger than a fixed threshold $k$ is controlled.
		More precisely, for each composition $m=(m_1,\dots,m_r)$ we consider the number of even
		parts strictly larger than $k$, and we introduce a two-variable generating function that
		encodes this statistic.
		
		We show that this generating function is rational and obtain explicit closed forms,
		depending on the parity of $k$.
		We obtain a rational bivariate generating function that exactly enumerates
		compositions of $n$ according to the number of even parts greater than $k$.
		From it, we derive linear recurrences and exact formulas for several natural
		specializations, including compositions containing no such part, the parity
		distribution of their number, their total number of occurrences among all
		compositions of $n$, and positional statistics describing the first occurrence
		of such a part.
		
		This combinatorial problem is motivated by questions arising from combinatorial
		expansions related to zeta functions of algebraic curves over finite fields, although the
		results of this paper are entirely combinatorial.
	\end{abstract}

	\medskip
	
	\noindent\textbf{Mathematics Subject Classification (2020).}
	Primary: 05A17, 11P83; Secondary: 05A15, 05A19.

	\medskip
	
	\noindent\textbf{Keywords.}
	Integer compositions, generating functions, arithmetic constraints.
	
	% ---- Début du papier ----
	
	\section{Introduction}
	
	A composition of a positive integer $n$ is a finite ordered tuple
	$m=(m_1,\dots,m_r)\in\Z_{>0}^r$ such that $m_1+\cdots+m_r=n$.
	We denote by $\PP(n)$ the set of compositions of $n$ (with the convention
	$\PP(0)=\{\emptyset\}$).
	It is classical (see, e.g., \cite{bero}) that
	\[
	\#\PP(n)=2^{n-1}\qquad(n\ge1).
	\]
	
	In many situations, one is led to count compositions under restrictions on the
	allowed part sizes. A powerful and elementary tool is the generating function
	identity
	\[
	C_A(x):=1+\sum_{n\ge1} c_A(n)x^n=\frac{1}{1-\sum_{j\in A}x^j},
	\]
	where $c_A(n)$ denotes the number of compositions of $n$ with all parts in
	a fixed set $A\subset\Z_{>0}$; see for instance \cite{bero}.
	
	Generating function techniques play a central role in modern analytic combinatorics; we refer to the monograph of Flajolet and Sedgewick~\cite{flse} for a comprehensive and systematic treatment of these methods.

	The study of integer compositions subject to various constraints has a long and active history in enumerative and analytic combinatorics. Many authors have investigated exact enumeration problems for compositions with restricted parts or prescribed structural properties, while others have focused on asymptotic and probabilistic aspects of random compositions. For instance, compositions with constraints on the parity or the size of parts, with bounded or forbidden part sizes, or with restrictions on multiplicities and distinctness of parts, have been widely studied using generating function methods and analytic techniques (see, e.g., \cite{bahi,bero,bo,hema07,hema,hist,hota,hota25,knmu,si}). These works illustrate the richness of the subject and the effectiveness of generating functions in deriving exact formulas, recurrences, and asymptotic estimates. The present paper fits into this general line of research, but focuses on a different statistic, namely the number of even parts exceeding a given threshold.
	
	In this paper, we study a closely related, but slightly different, problem:
	we do not forbid a set of parts entirely, but rather keep track of how often
	a given arithmetic family of parts occurs.
	More precisely, fixing an integer $k\ge1$, we consider the set of even integers
	strictly larger than $k$,
	\[
	\mathrm{Ev}_{>k}:=\{j\in\Z_{>0} : j\ \text{even and}\ j>k\},
	\]
	and we study the distribution of the number of occurrences of parts in $\mathrm{Ev}_{>k}$
	among compositions of $n$.
	
	The resulting two--variable generating function turns out to be rational, and
	this gives access to explicit coefficient formulas and asymptotic estimates.\\
	
	Our original motivation comes from arithmetic questions related to zeta functions
	of algebraic curves over finite fields. Coefficients of the corresponding
	\(L\)-polynomials admit composition expansions of the type recalled below
	(see \cite{ko}). In the tower that motivated this work, their sign structure
	naturally singles out even parts above a level-dependent threshold. This leads
	to four combinatorial questions studied here: absence, parity, multiplicity,
	and position of such parts. We now state precisely the arithmetic sign pattern
	from which these questions arose.
	
	\medskip

	\subsection*{Arithmetic origin of the four statistics}
	
	We briefly recall the arithmetic setting that originally led to the
	combinatorial questions considered in this paper. The arithmetic assertions in
	the next two paragraphs are results of a separate work of the authors which has
	not yet appeared. We state them here solely as unpublished motivating input,
	not as results of the present paper. No theorem, proposition, or corollary
	proved below depends on them: the combinatorial development starts from the
	explicit sign pattern \eqref{eq:motivating_sign_pattern}.
	
	Our motivating example is the Garcia--Stichtenoth tower descended to
	\(\mathbb F_q\). At level \(k\), let \(\mathcal G_k/\mathbb F_q\) denote the
	corresponding function field and let
	\[
	L_{\mathcal G_k}(t)=\sum_{n=0}^{2g_k}a_n^{(k)}t^n
	\]
	be its \(L\)-polynomial. The explicit composition formula for its coefficients
	has the form
	\[
	a_n^{(k)}
	=
	\sum_{m=(m_1,\ldots,m_r)\in\PP(n)} CR_k(m),
	\qquad
	CR_k(m)
	:=
	\prod_{s=1}^{r}
	\frac{S_{m_s}(\mathcal G_k)}{m_1+\cdots+m_s},
	\]
	where
	\[
	S_i(\mathcal G_k)=N_i(\mathcal G_k)-(q^i+1).
	\]
	
	In the separate arithmetic analysis mentioned above, one obtains, for every
	fixed \(i\),
	\[
	N_i(\mathcal G_k)=
	\begin{cases}
		N_2(\mathcal G_k)+o(q^k),& i\ \text{even},\\
		o(q^k),& i\ \text{odd},
	\end{cases}
	\qquad (k\to\infty),
	\]
	together with
	\[
	N_2(\mathcal G_k)
	=
	(q^2-1)q^{k-1}+O(1)
	=
	q^{k+1}-q^{k-1}+O(1).
	\]
	The arithmetic situation motivating the present paper is the case in which
	\[
	N_i(\mathcal G_k)<q^i+1
	\qquad\text{for every positive odd integer }i.
	\]
	In that setting, for \(k\) sufficiently large, the analysis leads to the sign
	pattern
	\begin{equation}\label{eq:motivating_sign_pattern}
		S_i(\mathcal G_k)
		\begin{cases}
			<0,& i\ge k+1,\ i\ \text{even},\\
			<0,& i\ge k+1,\ i\ \text{odd},\\
			>0,& i<k+1,\ i\ \text{even},\\
			<0,& i<k+1,\ i\ \text{odd}.
		\end{cases}
	\end{equation}
	We take this sign pattern as the starting point of the combinatorial problem
	studied in the present paper. Equivalently, even indices \(i\le k\) give
	positive factors, whereas odd indices and even indices \(i>k\) give negative
	factors. Thus the threshold \(k\) is precisely the arithmetic threshold at
	which the sign of the even-indexed quantities \(S_i(\mathcal G_k)\) changes.
	
	This leads naturally to the following statistic. For a composition
	\(m=(m_1,\ldots,m_r)\in\PP(n)\), set
	\[
	B(m):=\#\{\,s\in\{1,\ldots,r\}:m_s\text{ is even and }m_s>k\,\}.
	\]
	The next elementary observation is the precise bridge between the arithmetic
	sign pattern and the combinatorial statistic \(B(m)\).
	
	\begin{lem}\label{lem:arithmetic_sign_bridge}
		Assume \eqref{eq:motivating_sign_pattern} and let
		\(m=(m_1,\ldots,m_r)\in\PP(n)\). Then
		\[
		\operatorname{sgn}\bigl(CR_k(m)\bigr)=(-1)^{n+B(m)}.
		\]
		Consequently, if \(n\) is even, \(CR_k(m)\) is positive precisely when
		\(B(m)\) is even, whereas if \(n\) is odd, \(CR_k(m)\) is positive precisely
		when \(B(m)\) is odd.
	\end{lem}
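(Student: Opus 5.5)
The proof is a direct sign count over the factors of \(CR_k(m)\). The denominators \(m_1+\cdots+m_s\) are positive integers, so
\[
\operatorname{sgn}\bigl(CR_k(m)\bigr)=\prod_{s=1}^{r}\operatorname{sgn}\bigl(S_{m_s}(\mathcal G_k)\bigr).
\]
By \eqref{eq:motivating_sign_pattern}, every factor is nonzero. It is \(+1\) exactly when \(m_s\) is even and \(m_s\le k\), and \(-1\) otherwise, that is, when \(m_s\) is odd or when \(m_s\) is even with \(m_s>k\). Hence
\[
\operatorname{sgn}\bigl(CR_k(m)\bigr)=(-1)^{O(m)+B(m)},
\]
where \(O(m)\) is the number of odd parts of \(m\).

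The key step is to replace \(O(m)\) by \(n\) modulo \(2\). Reducing \(n=m_1+\cdots+m_r\) modulo \(2\), each even part contributes \(0\) and each odd part contributes \(1\). Therefore \(n\equiv O(m)\pmod 2\) and \((-1)^{O(m)}=(-1)^n\), which gives
\[
\operatorname{sgn}\bigl(CR_k(m)\bigr)=(-1)^{n+B(m)}.
\]
For the empty composition with \(n=0\), the product is empty, \(B=0\), and the formula holds trivially.

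The consequences follow immediately. \(CR_k(m)>0\) if and only if \(n+B(m)\) is even. If \(n\) is even, this means \(B(m)\) is even; if \(n\) is odd, it means \(B(m)\) is odd.

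The only point that needs care is the boundary case \(i=k+1\) versus \(i\le k\). The pattern is stated with \(i<k+1\), which for integers is the same as \(i\le k\), so the split matches the definition of \(B(m)\) (even and \(>k\)) exactly.
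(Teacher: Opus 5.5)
Your proof is correct and matches the paper's argument: the denominators are positive, a negative numerator occurs exactly for odd parts and for even parts \(>k\), so the sign is \((-1)^{o(m)+B(m)}\), and \(o(m)\equiv n \pmod 2\) finishes it. Your checks of the empty composition and of reading \(i<k+1\) as \(i\le k\) are small points that the paper leaves implicit.
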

	
	\begin{proof}
		All denominators in \(CR_k(m)\) are positive. By
		\eqref{eq:motivating_sign_pattern}, a negative numerator occurs for every odd
		part and for every even part strictly larger than \(k\). If \(o(m)\) denotes
		the number of odd parts of \(m\), then
		\[
		\operatorname{sgn}\bigl(CR_k(m)\bigr)=(-1)^{o(m)+B(m)}.
		\]
		Since the parity of the sum of the parts is the parity of the number of odd
		parts, \(o(m)\equiv n\pmod 2\), and the result follows.
	\end{proof}
	
	Lemma~\ref{lem:arithmetic_sign_bridge} leads to four increasingly refined
	combinatorial questions. First, compositions containing no even part beyond
	the threshold form a canonical same-sign subfamily; their enumeration is
	studied in Section~\ref{sec:avoid}. Second, the parity of the number of such
	parts determines the sign of each contribution, which motivates
	Section~\ref{sec:parity}. Third, the weighted arithmetic problem also makes it
	natural to measure how frequently these sign-changing even factors occur;
	their total multiplicity and average number are studied in
	Section~\ref{sec:total}.
	
	Finally, the absolute weight
	\[
	|CR_k(m)|
	=
	\prod_{s=1}^{r}
	\frac{|S_{m_s}(\mathcal G_k)|}{m_1+\cdots+m_s}
	\]
	depends on the positions of the parts through the partial-sum denominators.
	If a large even part \(m_i\) is modified in an attempt to compare
	contributions of opposite signs, the preceding partial sum
	\[
	D=m_1+\cdots+m_{i-1}
	\]
	enters naturally. Thus the location of the first large even part contains
	information that is lost when one records only its existence, parity, or
	multiplicity. This motivates the positional statistics studied in
	Section~\ref{sec:positional}. Notice that the frequency information developed
	here does not by itself compare the total absolute weights of the positive and
	negative contributions; the latter is a separate arithmetic problem.
	
	For emphasis, the unpublished arithmetic assertions above are never invoked
	after this motivational subsection except through references back to the
	motivation; all enumerative statements from Section~\ref{sec:notations}
	onward are proved from the definitions and the displayed generating functions.
	
	\medskip
	
	\section{Notations and basic definitions}\label{sec:notations}
	
	A \emph{composition} of a nonnegative integer $n$ is a finite sequence $m=(m_1,\dots,m_r)$ of positive integers such that
	\[
	m_1+\cdots+m_r = n.
	\]
	We denote by $\PP(n)$ the set of all compositions of $n$, with the convention that $\PP(0)=\{\varnothing\}$ consists of the empty composition. It is classical (see, e.g., \cite{bero}) that $\#\PP(n)=2^{n-1}$ for $n\ge 1$.
	
	Throughout the paper, we fix an integer $k\ge 1$. We denote by
	\[
	\mathrm{Ev}_{>k} = \{ j \in \mathbb{Z}_{\ge 1} : j \text{ is even and } j>k \}
	\]
	the set of even integers strictly greater than $k$. For a composition $m=(m_1,\dots,m_r)$, we define
	\[
	B(m) := \#\{\, t \in \{1,\dots,r\} : m_t \in \mathrm{Ev}_{>k} \,\},
	\]
	that is, $B(m)$ is the number of parts of $m$ that are even and strictly greater than $k$.
	
	We introduce the bivariate generating function
	\[
	F(x,y) := \sum_{n\ge 0} \left( \sum_{m\in \PP(n)} y^{B(m)} \right) x^n.
	\]
	In other words, the variable $x$ marks the size of the composition, while the variable $y$ keeps track of the number of parts counted by $B(m)$.
	
	For a formal power series
	\[
	A(x) = \sum_{n\ge 0} a_n x^n,
	\]
	we use the standard coefficient extraction notation
	\[
	[x^n]\,A(x) := a_n,
	\]
	that is, $[x^n]\,A(x)$ denotes the coefficient of $x^n$ in the series expansion of $A(x)$.
	
	Accordingly, we write
	\[
	F(x,y) = \sum_{n\ge 0} f_n(y)\, x^n, \qquad \text{where } f_n(y) := [x^n]\,F(x,y).
	\]
	By definition, we have
	\[
	f_n(y) = \sum_{m\in \PP(n)} y^{B(m)}.
	\]
	Expanding $f_n(y)$ as a polynomial in $y$, we write
	\[
	f_n(y) = \sum_{t\ge 0} a_{n,t}\, y^t,
	\]
	where
	\[
	a_{n,t} = \#\{\, m \in \PP(n) : B(m) = t \,\}
	\]
	is the number of compositions of $n$ having exactly $t$ parts that are even and strictly greater than $k$.
	
	\medskip

	For a function $G(x,y)$ that is differentiable with respect to $y$, we denote by
	\[
	\frac{\partial}{\partial y} G(x,y)
	\]
	its partial derivative with respect to the variable $y$.
	
	For a function $G(x,y)$, we denote by $\left.G(x,y)\right|_{y=1}$ its value after
	specialization at $y=1$, and similarly $\left.\dfrac{\partial}{\partial y}G(x,y)\right|_{y=1}$
	for the partial derivative evaluated at $y=1$.

	\section{The main generating function}
	
	We first establish a closed form expression for the bivariate generating function
	$F(x,y)$ introduced in the previous section.
	
	\begin{prop}\label{prop:Fxy_rational}
		Let $k\ge 1$ be fixed. Then the generating function
		\[
		F(x,y) = \sum_{n\ge 0} \left( \sum_{m\in \PP(n)} y^{B(m)} \right) x^n
		\]
		is a rational function in $x$ and $y$. More precisely, one has
		\[
		F(x,y) = \frac{1}{1 - \sum_{j\ge 1} w_j x^j},
		\]
		where the weights $w_j$ are given by
		\[
		w_j :=
		\begin{cases}
			y, & \text{if } j \in \mathrm{Ev}_{>k},\\
			1, & \text{otherwise}.
		\end{cases}
		\]
		Equivalently,
		\[
		F(x,y) = \frac{1}{1 - \sum_{j\ge 1} x^j - (y-1)\sum_{j\in \mathrm{Ev}_{>k}} x^j }.
		\]

	\end{prop}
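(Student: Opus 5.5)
The plan is the standard sequence decomposition of compositions, with each part weighted according to whether it lies in $\mathrm{Ev}_{>k}$. We work in the ring of formal power series $\Z[y][[x]]$. A composition of $n$ with exactly $r$ parts is an ordered $r$-tuple $(m_1,\dots,m_r)$ of positive integers. The statistic $B$ is additive over parts: $y^{B(m)}=\prod_{s=1}^r w_{m_s}$ with $w_j$ as in the statement. Hence
\[
\sum_{r\ge0}\ \sum_{(m_1,\dots,m_r)\in\Z_{>0}^r} \prod_{s=1}^r w_{m_s}x^{m_s}
=\sum_{r\ge0}\Bigl(\sum_{j\ge1}w_jx^j\Bigr)^r .
\]
Collecting terms by $n=m_1+\cdots+m_r$ gives exactly $F(x,y)$; the term $r=0$ accounts for the empty composition in $\PP(0)$.

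The series $W(x,y):=\sum_{j\ge1}w_jx^j$ has zero constant term in $x$. So the geometric series $\sum_r W^r$ converges in the $x$-adic topology, since $W^r$ only contributes to degrees $\ge r$. It equals the formal inverse $(1-W)^{-1}$, which gives the first displayed formula. Writing $w_j=1+(y-1)\mathbf 1_{j\in\mathrm{Ev}_{>k}}$ then yields the equivalent second form.

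For rationality we compute the two sums in closed form. First, $\sum_{j\ge1}x^j=x/(1-x)$. Second, let $e$ be the smallest even integer $>k$: $e=k+2$ if $k$ is even and $e=k+1$ if $k$ is odd. Then $\sum_{j\in\mathrm{Ev}_{>k}}x^j=x^{e}/(1-x^2)$. Substituting these, $F$ becomes the reciprocal of
\[
1-\frac{x}{1-x}-(y-1)\frac{x^e}{1-x^2}.
\]
This is a rational function in $x$ and $y$, and it is not identically zero because its constant term is $1$. So $F$ is rational. Clearing denominators gives
\[
F(x,y)=\frac{1-x^2}{(1-2x)(1+x)-(y-1)x^{e}},
\]
which is the form I would record, with $e$ depending on the parity of $k$.

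The main point needing care, rather than a real obstacle, is the justification of the sum-over-$r$ interchange and the identification of the formal inverse with the rational expression. This holds because all series involved lie in $\Z[y][[x]]$ with $W$ of positive $x$-valuation, so every coefficient of $x^n$ is a finite sum over $r\le n$.
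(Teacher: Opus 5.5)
Your argument is correct and is essentially the paper's proof. You decompose by the number of parts $r$, use $y^{B(m)}=\prod_{s}w_{m_s}$ to obtain $\sum_{r\ge0}A(x,y)^r$, invert $1-A$ in $\Z[y][[x]]$, and split the weights to get the second form. The only additions are these: you make explicit the positive $x$-valuation of $A$ that makes $\sum_r A^r$ well defined (the paper only checks the telescoping identity), and you carry out the closed-form evaluation $\sum_{j\in\mathrm{Ev}_{>k}}x^j=x^{e}/(1-x^2)$ to justify rationality, which the paper defers to Proposition~\ref{prop:sum_even_gt_k} and Corollary~\ref{cor:Fxy_explicit}.
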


	\begin{proof}
		Any composition $m\in\PP(n)$ is a finite ordered tuple
		\[
		m=(m_1,\dots,m_r)
		\]
		of positive integers, where the integer $r\ge 1$ is the number of parts; we also
		allow the empty composition $\emptyset\in\PP(0)$, which corresponds to the case
		$r=0$. We construct the generating function by summing the contributions of
		compositions according to their length $r$.
		
		For each $j\ge 1$, define
		\[
		w_j =
		\begin{cases}
			y, & \text{if } j\in \mathrm{Ev}_{>k},\\
			1, & \text{otherwise}.
		\end{cases}
		\]
		A part of size $j$ contributes a factor $x^j$ to record the total sum, and an
		additional factor $w_j$ to record whether it belongs to $\mathrm{Ev}_{>k}$. Therefore, the
		weight of a composition $m=(m_1,\dots,m_r)$ is
		\[
		(w_{m_1}x^{m_1})\cdots(w_{m_r}x^{m_r})
		= x^{m_1+\cdots+m_r}\, y^{B(m)}.
		\]
		Summing over all possible sizes of a single part yields the generating function
		\[
		A(x,y) := \sum_{j\ge 1} w_j x^j.
		\]
		Although this sum runs over infinitely many values of $j$, it still corresponds
		to the choice of a single part whose size may be any positive integer.
		
		A composition with exactly $r$ parts is an ordered $r$--tuple of such parts, and
		the choices of the parts are independent. By the product rule for generating
		functions, the generating function for compositions of length $r$ is therefore
		$A(x,y)^r$. Summing over all $r\ge 0$, where $r=0$ corresponds to the empty
		composition, gives
		\[
		F(x,y)=\sum_{r\ge 0} A(x,y)^r.
		\]
		Finally, in the ring of formal power series in $x$ with coefficients in $\Z[y]$,
		the geometric series identity holds:
		\[
		(1-A(x,y))\left(\sum_{r\ge 0} A(x,y)^r\right)=1,
		\]
		since the product expands as
		\[
		(1+A+A^2+\cdots)-(A+A^2+A^3+\cdots)=1,
		\]
		and all other terms cancel. Hence $\sum_{r\ge 0} A(x,y)^r$ is the multiplicative
		inverse of $1-A(x,y)$, and we obtain
		\[
		F(x,y)=\frac{1}{1-A(x,y)}=\frac{1}{1-\sum_{j\ge 1} w_j x^j},
		\]
		which proves the claim.
		
		To get the equivalent form, we split the sum defining $A(x,y)$ according to
		whether $j$ belongs to $\mathrm{Ev}_{>k}$ or not:
		\[
		\sum_{j\ge 1} w_j x^j
		= \sum_{j\notin \mathrm{Ev}_{>k}} 1\cdot x^j + \sum_{j\in \mathrm{Ev}_{>k}} y\,x^j
		= \sum_{j\ge 1} x^j + (y-1)\sum_{j\in \mathrm{Ev}_{>k}} x^j.
		\]
		Substituting this identity into the previous expression yields
		\[
		F(x,y)=\frac{1}{1-\sum_{j\ge 1} x^j-(y-1)\sum_{j\in \mathrm{Ev}_{>k}} x^j},
		\]
		as claimed.
	\end{proof}
	\subsection{An explicit rational form}
	
	We now make the previous expression more explicit by evaluating the geometric
	series $\sum_{j\in \mathrm{Ev}_{>k}} x^j$. This will be useful later on to derive recurrences
	for the coefficients of $F(x,y)$ and to study specializations such as $y=0$ and
	$y=-1$.
	
	\begin{prop}\label{prop:sum_even_gt_k}
		Let $k\ge 1$ and set
		\[
		\delta :=
		\begin{cases}
			1, & \text{if $k$ is odd},\\
			2, & \text{if $k$ is even}.
		\end{cases}
		\]
		Then
		\[
		\sum_{j\in \mathrm{Ev}_{>k}} x^j \;=\; \sum_{m\ge 0} x^{k+\delta+2m}
		\;=\; \frac{x^{k+\delta}}{1-x^2}.
		\]
	\end{prop}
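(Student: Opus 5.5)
The plan is to identify the set $\mathrm{Ev}_{>k}$ explicitly as an arithmetic progression and then sum a geometric series in the ring of formal power series in $x$.

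First I will show that the smallest element of $\mathrm{Ev}_{>k}$ is $k+\delta$. If $k$ is odd, then $k+1$ is even and exceeds $k$, so it belongs to $\mathrm{Ev}_{>k}$. No smaller integer $j>k$ exists, since $j\ge k+1$ already. If $k$ is even, then $k+1$ is odd. The next integer $k+2$ is even and exceeds $k$, so it is the smallest element. In both cases $k+\delta$ is even, and it is the least even integer strictly greater than $k$.

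Next I will prove the set equality
\[
\mathrm{Ev}_{>k}=\{\,k+\delta+2m : m\in\Z_{\ge 0}\,\}.
\]
For the inclusion $\supseteq$, each $k+\delta+2m$ is even, being an even number plus $2m$, and it is at least $k+\delta>k$. For the inclusion $\subseteq$, take $j\in\mathrm{Ev}_{>k}$. Minimality gives $j\ge k+\delta$. Since $j$ and $k+\delta$ are both even, the difference $j-(k+\delta)$ is a nonnegative even integer $2m$. The map $m\mapsto k+\delta+2m$ is injective, so each exponent appears exactly once. This gives the first equality
\[
\sum_{j\in \mathrm{Ev}_{>k}} x^j=\sum_{m\ge0}x^{k+\delta+2m}.
\]

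For the second equality, I factor out $x^{k+\delta}$ and use the identity $(1-x^2)\sum_{m\ge0}x^{2m}=1$ in $\Z[[x]]$. This is the same telescoping argument as in the proof of Proposition~\ref{prop:Fxy_rational}, applied with $A=x^2$.

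There is no real obstacle here. The only point needing care is the case distinction on the parity of $k$ in the first step. After that, the formal geometric series is routine.
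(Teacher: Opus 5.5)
Your proof is correct and follows the paper's argument: a parity case split that identifies $\mathrm{Ev}_{>k}$ as the progression $k+\delta+2m$ ($m\ge 0$), then factoring out $x^{k+\delta}$ and summing the formal geometric series in $x^2$. Your explicit check of both inclusions and of injectivity is slightly more careful than the paper, which simply lists the elements in each case.
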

	
	\begin{proof}
		The set $\mathrm{Ev}_{>k}$ consists of the even integers strictly larger than $k$.
		If $k$ is odd, then the smallest even integer $>k$ is $k+1$, so the elements of
		$Ev_{>k}$ are
		\[
		k+1,\ k+3,\ k+5,\ \dots
		\]
		i.e.\ the arithmetic progression $k+1+2m$ for $m\ge 0$. If $k$ is even, then the
		smallest even integer $>k$ is $k+2$, hence the elements of $\mathrm{Ev}_{>k}$ are
		\[
		k+2,\ k+4,\ k+6,\ \dots
		\]
		i.e.\ the arithmetic progression $k+2+2m$ for $m\ge 0$.
		
		Both cases are captured by the parameter $\delta\in\{1,2\}$ defined above, and we
		obtain
		\[
		\sum_{j\in \mathrm{Ev}_{>k}} x^j = \sum_{m\ge 0} x^{k+\delta+2m}
		= x^{k+\delta}\sum_{m\ge 0} (x^2)^m
		= \frac{x^{k+\delta}}{1-x^2},
		\]
		using the geometric series identity in formal power series.
	\end{proof}
	
	Combining Proposition~\ref{prop:Fxy_rational} with Proposition~\ref{prop:sum_even_gt_k},
	we obtain an explicit rational expression for $F(x,y)$.
	
	\begin{cor}\label{cor:Fxy_explicit}
		With \(\delta\) as in Proposition~\ref{prop:sum_even_gt_k}, one has
		\[
		\boxed{
			F(x,y)
			=
			\frac{1-x^2}
			{(1-2x)(1+x)-(y-1)x^{k+\delta}}
			=
			\frac{1-x^2}
			{1-x-2x^2-(y-1)x^{k+\delta}}.
		}
		\]
	\end{cor}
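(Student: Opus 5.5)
The plan is to substitute the two explicit geometric series into the second form of $F(x,y)$ in Proposition~\ref{prop:Fxy_rational} and then clear denominators. The first ingredient is the classical identity $\sum_{j\ge1}x^j = x/(1-x)$ in $\Z[y][[x]]$. The second is Proposition~\ref{prop:sum_even_gt_k}, which gives $\sum_{j\in\mathrm{Ev}_{>k}}x^j = x^{k+\delta}/(1-x^2)$. Substituting both yields
\[
F(x,y)=\frac{1}{1-\dfrac{x}{1-x}-(y-1)\dfrac{x^{k+\delta}}{1-x^2}}.
\]

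Next I will bring everything over the common denominator $1-x^2=(1-x)(1+x)$. This uses the rewriting $\frac{x}{1-x}=\frac{x(1+x)}{1-x^2}$. The denominator of $F$ then becomes
\[
\frac{(1-x^2)-x(1+x)-(y-1)x^{k+\delta}}{1-x^2}.
\]
Expanding gives $(1-x^2)-x-x^2=1-x-2x^2$, and this factors as $(1-2x)(1+x)$. Inverting produces both boxed expressions at once.

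The only point needing care is that these manipulations are legitimate in the formal power series ring $\Z[y][[x]]$, not merely for convergent $x$. Each of the series involved, namely $1-x$, $1-x^2$, and the final denominator $1-x-2x^2-(y-1)x^{k+\delta}$, has constant term $1$. Each is therefore invertible in $\Z[y][[x]]$, so the fraction identities hold there.

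This also shows that $F$ equals a quotient of polynomials in $x$ and $y$, which confirms the rationality claimed in Proposition~\ref{prop:Fxy_rational}. I do not expect any real obstacle; the computation is routine algebra.
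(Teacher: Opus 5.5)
Your proposal is correct and follows the paper's route: the paper obtains the corollary by substituting Proposition~\ref{prop:sum_even_gt_k} (together with $\sum_{j\ge1}x^j=x/(1-x)$) into Proposition~\ref{prop:Fxy_rational} and clearing the denominator $1-x^2$, which is exactly your computation. Your observation that every series you invert has constant term $1$, and so is invertible in $\Z[y][[x]]$, makes explicit a justification the paper leaves implicit.
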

	
	\begin{rem}\label{rem:exact_distribution}
		The bivariate series contains the complete distribution of \(B\). For every
		\(n,j\ge0\),
		\[
		\boxed{
			\#\{m\in\PP(n):B(m)=j\}=[x^ny^j]F(x,y),
		}
		\]
		where \([x^ny^j]G(x,y)\) denotes the coefficient of \(x^ny^j\) in the formal
		power-series expansion of \(G(x,y)\). Thus the rational function above exactly
		enumerates compositions of \(n\) according to any prescribed value of the
		number of even parts strictly larger than \(k\).
	\end{rem}
	
	\subsection{A recurrence for the coefficients}
	
	Since \(F(x,y)\) is rational in \(x\), for each fixed \(y\) its coefficient
	sequence satisfies a linear recurrence with constant coefficients; this standard
	consequence of rationality is recalled, for instance, in \cite{flse}. We record
	the resulting coefficient identity for later use.
	
	\begin{prop}\label{prop:recurrence_fn}
		Let \(s_k:=k+\delta\), the smallest even integer strictly larger than \(k\).
		Set \(f_n(y):=[x^n]F(x,y)\) for \(n\ge0\), and \(f_n(y):=0\) for \(n<0\).
		Then, for every \(n\ge0\),
		\[
		f_n(y)-f_{n-1}(y)-2f_{n-2}(y)-(y-1)f_{n-s_k}(y)
		=
		\begin{cases}
			1,&n=0,\\
			-1,&n=2,\\
			0,&\text{otherwise}.
		\end{cases}
		\]
		In particular, for every \(n\ge \max\{3,s_k\}\),
		\[
		\boxed{
			f_n(y)=f_{n-1}(y)+2f_{n-2}(y)+(y-1)f_{n-s_k}(y).
		}
		\]
	\end{prop}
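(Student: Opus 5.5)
The plan is to clear denominators in Corollary~\ref{cor:Fxy_explicit} and compare coefficients. Write $D(x,y):=1-x-2x^2-(y-1)x^{s_k}$, with $s_k=k+\delta$ by definition. Corollary~\ref{cor:Fxy_explicit} says $F(x,y)=(1-x^2)/D(x,y)$. Since $D$ has constant term $1$, it is invertible in $\Z[y][[x]]$, so this is equivalent to the identity of formal power series
\[
D(x,y)\,F(x,y)=1-x^2 .
\]
I will take this product form as the starting point, so that no analytic considerations enter.

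Next I extract the coefficient of $x^n$ on both sides for each $n\ge0$. Using $F=\sum_{n\ge0} f_n(y)x^n$ and the convention $f_n(y)=0$ for $n<0$, multiplying by $x^j$ shifts indices by $j$. Hence
\[
[x^n]\bigl(D\,F\bigr)=f_n(y)-f_{n-1}(y)-2f_{n-2}(y)-(y-1)f_{n-s_k}(y).
\]
The zero convention is exactly what makes this formula valid uniformly for all $n\ge0$, including small $n$ where some shifted indices are negative. On the right-hand side, $[x^n](1-x^2)$ equals $1$ for $n=0$, $-1$ for $n=2$, and $0$ otherwise. Equating the two sides gives the first displayed identity.

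The boxed recurrence then follows by specialization. If $n\ge3$, then $n\ne0,2$, so the right-hand side vanishes. The extra hypothesis $n\ge s_k$ ensures that every index $n-1$, $n-2$, $n-s_k$ is nonnegative, so the recurrence involves only genuine coefficients and not the zero convention. Since $s_k\ge2$, this means that for $n\ge\max\{3,s_k\}$ we obtain $f_n=f_{n-1}+2f_{n-2}+(y-1)f_{n-s_k}$.

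There is no real obstacle here. The only point needing care is justifying the passage from the rational expression to the product identity, which is the invertibility of $D$ in $\Z[y][[x]]$, together with careful index bookkeeping for small $n$.
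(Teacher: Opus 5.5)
Your proposal is correct and follows the paper's own proof: multiply through by the denominator from Corollary~\ref{cor:Fxy_explicit} to get $\bigl(1-x-2x^2-(y-1)x^{s_k}\bigr)F(x,y)=1-x^2$, then compare coefficients of $x^n$ using $f_j(y)=0$ for $j<0$. Your extra remarks are accurate: $D$ is invertible in $\Z[y][[x]]$, and the condition $n\ge s_k$ only ensures that no negative indices occur.
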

	
	\begin{proof}
		Corollary~\ref{cor:Fxy_explicit} gives
		\[
		\bigl(1-x-2x^2-(y-1)x^{s_k}\bigr)
		\sum_{n\ge0}f_n(y)x^n=1-x^2.
		\]
		Coefficient comparison, with \(f_j(y)=0\) for \(j<0\), yields the first
		identity. Its right-hand side vanishes for \(n\ge 3\); hence the displayed
		homogeneous recurrence holds for \(n\ge \max\{3,s_k\}\).
	\end{proof}
	
	\section{Specializations and refined enumerations}
	
	In this section, we exploit the explicit rational form of the bivariate generating
	function $F(x,y)$ to derive several refined enumerative results. We first study the
	specialization $y=0$, which corresponds to compositions avoiding large even parts
	altogether and leads to exact formulas for their number. We then turn to the
	specialization $y=-1$, which allows us to separate compositions according to the
	parity of the number of such parts. Finally, we investigate the total number of
	occurrences of large even parts among all compositions of a given integer, which
	naturally leads to an average-value interpretation.

	\subsection{Avoiding large even parts: the specialization $y=0$}\label{sec:avoid}
	
	From the arithmetic viewpoint described in the introduction, this is the
	coarsest favourable subfamily to isolate. A composition with \(B(m)=0\)
	contains no even part beyond the threshold at which the even-indexed
	\(S_i^{(k)}\) change sign. By Lemma~\ref{lem:arithmetic_sign_bridge}, all such
	compositions therefore contribute with the same sign \((-1)^n\) to the
	coefficient expansion. Thus \(c_k(n)\) counts a canonical same-sign subfamily
	of the terms occurring in \(a_n^{(k)}\).
	
	Setting $y=0$ in $F(x,y)$ amounts to counting only those compositions for which
	$B(m)=0$, i.e.\ compositions having \emph{no} part in $\mathrm{Ev}_{>k}$.
	We denote this number by
	\[
	c_k(n):=\#\{\, m\in \PP(n): B(m)=0\,\}.
	\]
	By definition of $F(x,y)$, we have
	\[
	\sum_{n\ge 0} c_k(n)\,x^n = F(x,0).
	\]
	
	\begin{prop}\label{prop:F_x_0}
		With $\delta\in\{1,2\}$ as in Proposition~\ref{prop:sum_even_gt_k}, one has
		\[
		F(x,0)=\frac{(1-x)(1-x^2)}{(1-2x)(1-x^2)+(1-x)x^{k+\delta}}.
		\]
	\end{prop}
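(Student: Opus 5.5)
The plan is to specialize the general formula of Proposition~\ref{prop:Fxy_rational} at $y=0$, insert the closed form of Proposition~\ref{prop:sum_even_gt_k}, and then clear denominators. Setting $y=0$ gives
\[
F(x,0)=\frac{1}{1-\sum_{j\ge1}x^j+\sum_{j\in\mathrm{Ev}_{>k}}x^j}.
\]
This is legitimate in the ring of formal power series in $x$ over $\Z[y]$: evaluation $y\mapsto 0$ is a ring homomorphism $\Z[y][[x]]\to\Z[[x]]$, so it commutes with taking inverses of series with constant term $1$. Combinatorially, the same formula is just $C_A(x)$ for $A=\Z_{>0}\setminus\mathrm{Ev}_{>k}$, which gives a second justification.

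Next I substitute $\sum_{j\ge1}x^j=\frac{x}{1-x}$ and, by Proposition~\ref{prop:sum_even_gt_k}, $\sum_{j\in\mathrm{Ev}_{>k}}x^j=\frac{x^{k+\delta}}{1-x^2}$. The denominator becomes
\[
1-\frac{x}{1-x}+\frac{x^{k+\delta}}{1-x^2}
=\frac{1-2x}{1-x}+\frac{x^{k+\delta}}{1-x^2}.
\]
I then multiply numerator and denominator by $(1-x)(1-x^2)$, which is invertible in $\Z[[x]]$. The denominator turns into
\[
(1-2x)(1-x^2)+(1-x)x^{k+\delta}
\]
and the numerator into $(1-x)(1-x^2)$, which is exactly the claimed expression.

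As a cross-check, I would also derive the result from Corollary~\ref{cor:Fxy_explicit} at $y=0$, which gives
\[
F(x,0)=\frac{1-x^2}{(1-2x)(1+x)+x^{k+\delta}}.
\]
Multiplying top and bottom by $1-x$ and using $(1+x)(1-x)=1-x^2$ reproduces the stated formula.

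No step here is a real obstacle. The only point needing a remark is the algebraic justification of specializing $y=0$ and of manipulating these rational expressions as formal power series, and that is handled by the homomorphism argument above. I would state the result in the unreduced form displayed in the proposition, even though the common factor $1-x$ could be cancelled.
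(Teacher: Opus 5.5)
Your proposal is correct and is essentially the paper's argument. The paper specializes Corollary~\ref{cor:Fxy_explicit} at $y=0$ and multiplies numerator and denominator by $1-x$, which is exactly your cross-check; your main route simply redoes the combination of Propositions~\ref{prop:Fxy_rational} and~\ref{prop:sum_even_gt_k} that produced that corollary, and your remark on the evaluation homomorphism $y\mapsto 0$ adds rigor the paper leaves implicit.
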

	
	\begin{proof}
		This is immediate from Corollary~\ref{cor:Fxy_explicit}. Indeed, when $y=0$ we
		have $y-1=-1$, hence
		\[
		F(x,0)=\frac{(1-x)(1-x^2)}{(1-2x)(1-x^2)- (y-1)(1-x)x^{k+\delta}}
		=\frac{(1-x)(1-x^2)}{(1-2x)(1-x^2)+(1-x)x^{k+\delta}}.
		\]
	\end{proof}
	
	\medskip
	
	Since $F(x,0)$ is a rational function, the sequence $(c_k(n))_{n\ge 0}$ satisfies
	a linear recurrence with constant coefficients. Moreover, it admits an exact
	closed form in terms of the roots of its denominator, as we now explain.
	
	\subsubsection*{Exact formula for $c_k(n)$ via partial fractions}
	
	Let us write
	\[
	F(x,0)=\frac{(1-x)(1-x^2)}{Q_k(x)},
	\qquad
	Q_k(x):=(1-2x)(1-x^2)+(1-x)x^{k+\delta}.
	\]
	Cancelling the common factor $(1-x)$, we may equivalently write
	\[
	F(x,0)=\frac{P_k(x)}{R_k(x)},
	\qquad
	P_k(x):=1-x^2,
	\qquad
	R_k(x):=(1-2x)(1+x)+x^{k+\delta}.
	\]
	
	When the roots of \(R_k(x)\) are simple, denote them by
	\(\alpha_{k,1},\dots,\alpha_{k,d}\). The following standard partial-fraction
	formula then applies. If repeated roots occur, the usual higher-order
	partial-fraction terms must be used instead.
	
	\begin{lem}\label{lem:partial_fraction_general}
		Let
		\[
		G(x)=\frac{P(x)}{Q(x)}
		\]
		be a proper rational function, where \(P,Q\in\C[x]\),
		\(\deg P<\deg Q\), and assume that all roots
		\(\alpha_1,\dots,\alpha_d\) of \(Q(x)\) are simple. Then there exist constants
		$C_1,\dots,C_d\in\C$ such that
		\[
		[x^n]\,G(x) = \sum_{i=1}^d C_i\,\alpha_i^{-n}\qquad(n\ge 0),
		\]
		where
		\[
		C_i = -\frac{P(\alpha_i)}{\alpha_i Q'(\alpha_i)}.
		\]
	\end{lem}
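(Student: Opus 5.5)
The plan is the classical partial-fraction decomposition over $\C$, followed by a geometric series expansion of each simple pole. One hidden assumption needs flagging first: for $\alpha_i^{-n}$ to make sense we need $\alpha_i\neq 0$, i.e.\ $Q(0)\neq 0$. In the intended application $Q=R_k$ has $R_k(0)=1$, so I will assume $Q(0)\neq0$ explicitly. This is also needed for $G$ to be a formal power series in the first place.

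\textbf{Step 1: factor and decompose.} Since all roots are simple and $\deg Q=d$, write $Q(x)=c\prod_{i=1}^d(x-\alpha_i)$ with $c\neq0$. Because $\deg P<\deg Q$, the standard decomposition over $\C$ reads
\[
\frac{P(x)}{Q(x)}=\sum_{i=1}^d\frac{A_i}{x-\alpha_i}.
\]
The coefficient is obtained by multiplying by $x-\alpha_i$ and letting $x\to\alpha_i$:
\[
A_i=\frac{P(\alpha_i)}{\prod_{j\ne i}c(\alpha_i-\alpha_j)}=\frac{P(\alpha_i)}{Q'(\alpha_i)}.
\]
Here we use $Q'(\alpha_i)=c\prod_{j\ne i}(\alpha_i-\alpha_j)$, which is nonzero by simplicity. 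Existence and uniqueness follow, for instance, because the map $(A_1,\dots,A_d)\mapsto\sum_i A_i\prod_{j\neq i}(x-\alpha_j)$ sends $\C^d$ onto polynomials of degree $<d$. Indeed, evaluating at $x=\alpha_i$ recovers $A_i$ up to a nonzero factor, so the map is injective, and hence bijective by dimension count.

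\textbf{Step 2: expand each term as a power series.} Since $\alpha_i\neq0$, in $\C[[x]]$ we have
\[
\frac{1}{x-\alpha_i}=-\frac{1}{\alpha_i}\cdot\frac{1}{1-x/\alpha_i}=-\sum_{n\ge0}\alpha_i^{-n-1}x^n.
\]
This uses the same formal geometric-series identity as in the proof of Proposition~\ref{prop:Fxy_rational}.

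\textbf{Step 3: collect coefficients.} Taking the coefficient of $x^n$ gives
\[
[x^n]\,G(x)=\sum_i -\frac{A_i}{\alpha_i}\alpha_i^{-n}.
\]
This is the claimed formula with $C_i=-P(\alpha_i)/(\alpha_iQ'(\alpha_i))$.

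\textbf{Main obstacle.} There is no real obstacle here. The only points needing care are justifying $Q'(\alpha_i)\neq0$, which follows from simplicity, and making explicit the assumption $\alpha_i\neq0$. Identifying the rational function with its power-series expansion is legitimate because $Q(0)\neq0$ makes $Q$ invertible in $\C[[x]]$.
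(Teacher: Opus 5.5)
Your proof is correct and carries out exactly the standard simple-pole partial-fraction (residue) computation that the paper only cites, and your explicit hypothesis $Q(0)\neq 0$ is genuinely needed and merely implicit in the paper's statement. One cosmetic slip: the intermediate denominator should read $c\prod_{j\neq i}(\alpha_i-\alpha_j)$ rather than $\prod_{j\neq i}c(\alpha_i-\alpha_j)$, which would give $c^{d-1}$ instead of $c$.
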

	\begin{proof}
		This is the standard partial-fraction decomposition for a rational function
		with simple poles (see, e.g., \cite{flse}); the displayed constant is the
		corresponding residue coefficient.
	\end{proof}
	
	Applying Lemma~\ref{lem:partial_fraction_general} to $F(x,0)=P_k(x)/R_k(x)$ gives:
	
	\begin{thm}\label{thm:ck_exact_formula}
	Assume \(k\ge 2\). Let
	\(\alpha_{k,1},\dots,\alpha_{k,d}\) be the simple roots of
	\[
	R_k(x)=(1-2x)(1+x)+x^{k+\delta}.
	\]
		Then, for all $n\ge 0$, one has the exact formula
		\[
		\boxed{
			c_k(n)=\sum_{i=1}^d C_{k,i}\,\alpha_{k,i}^{-n},
			\qquad
			C_{k,i}=-\frac{1-\alpha_{k,i}^2}{\alpha_{k,i} R_k'(\alpha_{k,i})}.
		}
		\]
	\end{thm}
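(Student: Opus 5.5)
The plan is to apply Lemma~\ref{lem:partial_fraction_general} directly to $G(x)=F(x,0)=P_k(x)/R_k(x)$ with $P_k(x)=1-x^2$ and $R_k(x)=(1-2x)(1+x)+x^{k+\delta}$. First I will confirm, from Proposition~\ref{prop:F_x_0}, that $F(x,0)=P_k/R_k$ as formal power series: cancel the factor $(1-x)$ in numerator and denominator. This is legitimate because $1-x$ is invertible in $\C[[x]]$. I also need $R_k(0)=1\neq0$, which makes $P_k/R_k$ a genuine power series with coefficients $c_k(n)$.

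Next I check the hypotheses of the lemma. The degree condition is the reason for assuming $k\ge2$. In that case $k+\delta\ge 4>2$, so $\deg R_k=k+\delta$ with leading coefficient $1$, while $\deg P_k=2<\deg R_k$. Hence $P_k/R_k$ is proper. For $k=1$ we would have $\delta=1$ and $k+\delta=2$, so $R_1=1-x-x^2$ is quadratic, the fraction is not proper, and a polynomial part would spoil the formula at small $n$. This explains the exclusion of $k=1$.

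Simplicity of the roots is assumed in the statement. Moreover, $0$ is not a root because $R_k(0)=1$, so each $\alpha_{k,i}^{-n}$ makes sense.

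Then the lemma gives $c_k(n)=[x^n]F(x,0)=\sum_i C_{k,i}\alpha_{k,i}^{-n}$ for all $n\ge0$, with
\[
C_{k,i}=-\frac{P_k(\alpha_{k,i})}{\alpha_{k,i}R_k'(\alpha_{k,i})}=-\frac{1-\alpha_{k,i}^2}{\alpha_{k,i}R_k'(\alpha_{k,i})}.
\]
For completeness I would justify the constant. Writing $R_k(x)=c\prod_i(x-\alpha_i)$, the decomposition is $P/R=\sum_i \frac{P(\alpha_i)}{R'(\alpha_i)}\frac{1}{x-\alpha_i}$. Expanding $\frac{1}{x-\alpha}=-\alpha^{-1}\sum_n \alpha^{-n}x^n$ then yields exactly that constant.

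The main subtlety is only the properness check (the role of $k\ge2$) together with the legitimacy of the cancellation. Everything else is a direct substitution.
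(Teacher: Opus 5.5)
Your proposal is correct and follows the paper's approach. The paper obtains the theorem by applying Lemma~\ref{lem:partial_fraction_general} directly to $F(x,0)=(1-x^2)/R_k(x)$. Your checks make explicit what the paper leaves implicit: that $Q_k=(1-x)R_k$, that $R_k(0)=1$, and that $\deg R_k=k+\delta\ge 4>2$ for $k\ge2$. Together with the assumed simplicity of the roots, these are exactly the hypotheses the lemma requires, and they account for the exclusion of $k=1$.
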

	
	If \(R_k\) has repeated roots, the standard higher-order partial-fraction
	decomposition (see \cite{flse}) gives the corresponding exact formula, with
	polynomial factors in \(n\) multiplying powers of the reciprocal roots. Thus
	the simple-root hypothesis is used only to display the particularly compact
	form above.
	
	This formula expresses $c_k(n)$ \emph{exactly} as a finite linear combination of
	exponential terms $\alpha^{-n}$, and allows effective numerical computation for
	any fixed $k$ and $n$.
	
	The case \(k=1\) is elementary and can be treated separately. Indeed,
	\[
	F(x,0)
	=
	\frac{1-x^2}{1-x-x^2}
	=
	1+\frac{x}{1-x-x^2}.
	\]
	Thus the coefficients are obtained directly from the Fibonacci
	recurrence. The restriction \(k\ge2\) above concerns only the compact
	proper-fraction representation.

	\subsection{Even versus odd number of large even parts}\label{sec:parity}
	
	This specialization is the most direct one from the motivating arithmetic
	problem. Lemma~\ref{lem:arithmetic_sign_bridge} shows that the parity of
	\(B(m)\) determines the sign of the individual contribution \(CR_k(m)\).
	Consequently, \(E_k(n)\) and \(O_k(n)\) count the two sign classes in the
	composition expansion (with their roles interchanged according to the parity
	of \(n\)), while \(E_k(n)-O_k(n)\) is their signed numerical imbalance.
	
	Recall that
	\[
	f_n(y)=[x^n]F(x,y)=\sum_{t\ge 0} a_{n,t}y^t,
	\qquad
	a_{n,t}=\#\{\,m\in\PP(n): B(m)=t\,\}.
	\]
	We define
	\[
	E_k(n):=\#\{\,m\in\PP(n): B(m)\ \text{is even}\,\},
	\qquad
	O_k(n):=\#\{\,m\in\PP(n): B(m)\ \text{is odd}\,\}.
	\]
	Then
	\[
	E_k(n)+O_k(n)=\sum_{t\ge 0} a_{n,t}=f_n(1),
	\qquad
	E_k(n)-O_k(n)=\sum_{t\ge 0} a_{n,t}(-1)^t=f_n(-1).
	\]
	Solving gives
	\[
	E_k(n)=\frac{f_n(1)+f_n(-1)}{2},
	\qquad
	O_k(n)=\frac{f_n(1)-f_n(-1)}{2}.
	\]
	
	Since
	\[
	F(x,1)=\frac{1-x}{1-2x},
	\qquad
	[x^n]F(x,1)=2^{n-1}\quad(n\ge 1),
	\]
	we only need an explicit form for $[x^n]F(x,-1)$.
	
	\begin{prop}\label{prop:F_x_minus1}
		One has
		\[
		F(x,-1)=\frac{(1-x)(1-x^2)}{\widetilde Q_k(x)},
		\qquad
		\widetilde Q_k(x)=(1-2x)(1-x^2)+2(1-x)x^{k+\delta}.
		\]
		After cancelling $(1-x)$, this can be written as
		\[
		F(x,-1)=\frac{1-x^2}{\widetilde R_k(x)},
		\qquad
		\widetilde R_k(x)=(1-2x)(1+x)+2x^{k+\delta}.
		\]
	\end{prop}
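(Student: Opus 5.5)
The plan is to specialize the closed form of Corollary~\ref{cor:Fxy_explicit} at \(y=-1\) and then rewrite it, in the same way as for the specialization \(y=0\) in Proposition~\ref{prop:F_x_0}. Substitution is legitimate because \(F(x,y)\) is a formal power series in \(x\) with coefficients in \(\Z[y]\). Its denominator has constant term \(1\) for every value of \(y\), so evaluating at \(y=-1\) commutes with coefficient extraction and gives \(\sum_{n\ge0} f_n(-1)x^n\).

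The key steps are as follows.

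\emph{Step 1.} Corollary~\ref{cor:Fxy_explicit} with \(y-1=-2\) gives
\[
F(x,-1)=\frac{1-x^2}{(1-2x)(1+x)+2x^{k+\delta}},
\]
which is already the second displayed form with \(\widetilde R_k\).

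\emph{Step 2.} To obtain the first form, multiply numerator and denominator by \((1-x)\). This uses \((1-2x)(1+x)(1-x)=(1-2x)(1-x^2)\), so the denominator becomes \((1-2x)(1-x^2)+2(1-x)x^{k+\delta}=\widetilde Q_k(x)\), and the numerator becomes \((1-x)(1-x^2)\).

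Alternatively, one can start from Proposition~\ref{prop:Fxy_rational} together with Proposition~\ref{prop:sum_even_gt_k}. Writing \(\sum_{j\ge1}x^j=x/(1-x)\) and putting everything over the common denominator \((1-x)(1-x^2)\) gives
\[
1-\frac{x}{1-x}+\frac{2x^{k+\delta}}{1-x^2}=\frac{(1-2x)(1-x^2)+2(1-x)x^{k+\delta}}{(1-x)(1-x^2)}.
\]
This yields the \(\widetilde Q_k\) form directly, and cancelling \((1-x)\) then gives the \(\widetilde R_k\) form.

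\emph{Step 3.} Justify the cancellation. Both expressions are identities of rational functions. Multiplying numerator and denominator by the nonzero polynomial \(1-x\), which is invertible as a formal power series, does not change the associated power series. Hence the two forms define the same element of \(\Z[[x]]\).

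There is no real obstacle here. The only point requiring care is the legitimacy of the specialization \(y=-1\) at the level of formal power series: \(\widetilde R_k(0)=1\neq0\), so the inverse exists in \(\Z[[x]]\) and agrees with the specialization. The remaining algebraic identity should be checked once by expansion.
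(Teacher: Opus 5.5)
Your proposal is correct and is essentially the paper's argument: the paper's proof consists only of specializing Corollary~\ref{cor:Fxy_explicit} at \(y=-1\), and your Steps 1--2 spell out the multiplication by \(1-x\) that links the \(\widetilde R_k\) and \(\widetilde Q_k\) forms. Your observation that evaluation at \(y=-1\) is a ring homomorphism \(\Z[y][[x]]\to\Z[[x]]\), together with \(\widetilde R_k(0)=1\), supplies the formal-power-series justification that the paper leaves implicit.
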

	
	\begin{proof}
		This follows from Corollary~\ref{cor:Fxy_explicit} by specializing $y=-1$.
	\end{proof}
	
	Assume \(k\ge 2\) and that the roots of
	\[
	\widetilde R_k(x)=(1-2x)(1+x)+2x^{k+\delta}
	\]
	are simple, and denote them by
	\(\beta_{k,1},\dots,\beta_{k,e}\).
	Applying Lemma~\ref{lem:partial_fraction_general} again, we obtain:
	
	\begin{thm}\label{thm:EkOk_exact}
		Under the preceding hypotheses, for all $n\ge 1$ one has the exact formulas
		\[
		\boxed{
			E_k(n)=2^{n-2}+\frac12\sum_{j=1}^e \widetilde C_{k,j}\,\beta_{k,j}^{-n},
			\qquad
			O_k(n)=2^{n-2}-\frac12\sum_{j=1}^e \widetilde C_{k,j}\,\beta_{k,j}^{-n},
		}
		\]
		where
		\[
		\widetilde C_{k,j}=-\frac{1-\beta_{k,j}^2}{\beta_{k,j}\widetilde R_k'(\beta_{k,j})}.
		\]
	\end{thm}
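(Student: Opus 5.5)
The plan is to combine the parity decomposition already recorded,
\[
E_k(n)=\frac{f_n(1)+f_n(-1)}{2},\qquad O_k(n)=\frac{f_n(1)-f_n(-1)}{2},
\]
with the known value \(f_n(1)=[x^n]F(x,1)=2^{n-1}\) for \(n\ge1\). The latter follows from \(F(x,1)=(1-x)/(1-2x)=1+x/(1-2x)\). Halving gives the main term \(2^{n-2}\), so everything reduces to the identity
\[
f_n(-1)=\sum_{j=1}^e \widetilde C_{k,j}\,\beta_{k,j}^{-n}\qquad(n\ge1),
\]
and, as will come out for free, this identity holds even for \(n\ge0\).

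To obtain this identity I would apply Lemma~\ref{lem:partial_fraction_general} to the representation \(F(x,-1)=(1-x^2)/\widetilde R_k(x)\) from Proposition~\ref{prop:F_x_minus1}, taking \(P=1-x^2\) and \(Q=\widetilde R_k\). The hypotheses are checked as follows.

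First, the degree condition. We have \(\widetilde R_k(x)=1-x-2x^2+2x^{k+\delta}\). Since \(k\ge2\), the exponent satisfies \(k+\delta\ge3\), so \(\deg\widetilde R_k=k+\delta\ge3>2=\deg P\). Thus the fraction is proper, and this is exactly where \(k\ge2\) is used.

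Second, the roots are simple by assumption.

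Third, no root is \(0\), because \(\widetilde R_k(0)=1\). Hence \(\beta_{k,j}^{-n}\) makes sense and the expansion of \(P/Q\) at \(0\) is the formal power series \(F(x,-1)\).

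The lemma then yields the stated constants \(\widetilde C_{k,j}=-(1-\beta_{k,j}^2)/(\beta_{k,j}\widetilde R_k'(\beta_{k,j}))\). Substituting into the parity formulas gives both boxed identities.

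If the lemma is to be justified rather than merely cited, the one genuinely computational step is the residue constant. I would write \(Q(x)=c\prod_i(x-\alpha_i)\) and decompose \(P/Q=\sum_i A_i/(x-\alpha_i)\), where \(A_i=P(\alpha_i)/Q'(\alpha_i)\). Then I would expand each term as
\[
\frac{A_i}{x-\alpha_i}=-\frac{A_i}{\alpha_i}\sum_{n\ge0}\alpha_i^{-n}x^n .
\]
This produces \(C_i=-P(\alpha_i)/(\alpha_iQ'(\alpha_i))\), and no polynomial part appears because the fraction is proper.

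The main obstacle is bookkeeping rather than conceptual. It consists in verifying properness, which needs \(k\ge2\), and in ensuring the range \(n\ge1\) is respected. The formula \(2^{n-1}\) fails at \(n=0\), where \(f_0(1)=1\), so the statement must be restricted to \(n\ge1\).
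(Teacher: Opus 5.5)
Your proposal is correct and follows the paper's argument exactly: you combine \(E_k(n),O_k(n)=\tfrac12\bigl(f_n(1)\pm f_n(-1)\bigr)\) with \(f_n(1)=2^{n-1}\) for \(n\ge1\), and apply Lemma~\ref{lem:partial_fraction_general} to \(F(x,-1)=(1-x^2)/\widetilde R_k(x)\). Your explicit checks that the fraction is proper (where \(k\ge2\) is used) and that \(\widetilde R_k(0)=1\neq0\) make visible what the paper leaves implicit; your residue justification also covers the fact that \(x=1\) is always a simple root of \(\widetilde R_k\) at which \(1-x^2\) vanishes, so that root merely contributes a zero coefficient.
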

	
	If \(\widetilde R_k\) has repeated roots, the same coefficient extraction is
	obtained from the standard higher-order partial-fraction decomposition (see
	\cite{flse}); the corresponding terms are polynomials in \(n\) multiplied by
	powers of the reciprocal roots. No simple-root assertion is needed elsewhere
	in the paper.
	
	For \(k=1\), the specialization is particularly simple:
	\[
	F(x,-1)
	=
	\frac{1-x^2}{1-x}
	=
	1+x.
	\]
	Consequently,
	\[
	E_1(1)=1,\qquad O_1(1)=0,
	\]
	and, for every \(n\ge2\),
	\[
	E_1(n)=O_1(n)=2^{n-2}.
	\]
	
	These identities give \emph{exact closed forms} for the number of compositions of
	$n$ with an even (resp.\ odd) number of even parts strictly larger than $k$.
	
	The preceding formulas now return directly to the arithmetic motivation.
	Define
	\[
	P_k^+(n):=\#\{m\in\PP(n):CR_k(m)>0\},
	\qquad
	P_k^-(n):=\#\{m\in\PP(n):CR_k(m)<0\}.
	\]
	
	\begin{cor}\label{cor:arithmetic_sign_imbalance}
		Under the sign pattern \eqref{eq:motivating_sign_pattern}, one has
		\[
		(P_k^+(n),P_k^-(n))
		=
		\begin{cases}
			(E_k(n),O_k(n)),& n\ \mathrm{even},\\
			(O_k(n),E_k(n)),& n\ \mathrm{odd}.
		\end{cases}
		\]
		Consequently,
		\[
		P_k^+(n)-P_k^-(n)
		=
		(-1)^n\bigl(E_k(n)-O_k(n)\bigr)
		=
		(-1)^n[x^n]F(x,-1),
		\]
		where, as defined in Section~\ref{sec:notations}, \([x^n]G(x)\) denotes the coefficient of
		\(x^n\) in the power-series expansion of \(G(x)\).
	\end{cor}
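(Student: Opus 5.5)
The plan is to read the corollary off Lemma~\ref{lem:arithmetic_sign_bridge}, so no new enumeration is needed. First I will observe that, under \eqref{eq:motivating_sign_pattern}, every numerator $S_{m_s}(\mathcal G_k)$ is nonzero, since each case of the pattern is a strict inequality. Every denominator $m_1+\cdots+m_s$ is a positive integer. Hence $CR_k(m)\neq 0$ for every $m\in\PP(n)$, and so $\PP(n)$ is the disjoint union of $\{CR_k(m)>0\}$ and $\{CR_k(m)<0\}$. This point matters because otherwise $P_k^+(n)+P_k^-(n)$ could fall short of $2^{n-1}$.

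Next I will apply Lemma~\ref{lem:arithmetic_sign_bridge}, which gives $\operatorname{sgn}(CR_k(m))=(-1)^{n+B(m)}$. For $n$ even, $CR_k(m)>0$ holds exactly when $B(m)$ is even, so $P_k^+(n)=E_k(n)$. Because of the partition above, it follows that $P_k^-(n)=O_k(n)$. For $n$ odd, the roles are swapped: $CR_k(m)>0$ holds exactly when $B(m)$ is odd, which gives $(P_k^+(n),P_k^-(n))=(O_k(n),E_k(n))$.

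The difference formula then follows by a short case check. Both cases combine to $P_k^+(n)-P_k^-(n)=(-1)^n(E_k(n)-O_k(n))$. The paper has already shown $E_k(n)-O_k(n)=\sum_t a_{n,t}(-1)^t=f_n(-1)=[x^n]F(x,-1)$, so substituting gives the final identity. If one prefers an explicit form, it can also be written directly as $\sum_{m\in\PP(n)}(-1)^{n+B(m)}$.

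I do not expect a genuine obstacle here. The only step needing care is the nonvanishing of $CR_k(m)$, which guarantees that the two sign classes exhaust $\PP(n)$. For $n=0$ the empty product equals $1>0$ and $B(\varnothing)=0$, which is consistent with the even case, so that edge case also checks out.
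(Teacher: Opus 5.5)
Your proposal is correct and follows the paper's route: the corollary is read off directly from Lemma~\ref{lem:arithmetic_sign_bridge} together with the identity $E_k(n)-O_k(n)=f_n(-1)=[x^n]F(x,-1)$. Your remark that the strict inequalities in \eqref{eq:motivating_sign_pattern} force $CR_k(m)\neq 0$, so that the two sign classes partition $\PP(n)$, is a detail the paper leaves implicit; it is already contained in the lemma's conclusion $\operatorname{sgn}(CR_k(m))=\pm1$.
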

	
	\begin{proof}
		This is an immediate consequence of
		Lemma~\ref{lem:arithmetic_sign_bridge} and the definitions of \(E_k(n)\) and
		\(O_k(n)\).
	\end{proof}
	
	Thus \(F(x,-1)\) measures exactly the numerical imbalance between the two sign
	classes in the composition expansion of the \(L\)-polynomial coefficient
	\(a_n^{(k)}\). This statement concerns the number of contributions of each
	sign, not the comparison of their absolute arithmetic weights.

	\subsection{Total number of large even parts among all compositions}\label{sec:total}
	
	The parity statistic records only whether the number of sign-changing large
	even parts is even or odd. For the weighted arithmetic problem one is also led
	to ask how many such factors are typically present. The quantity \(T_k(n)\)
	records their total multiplicity over all compositions and hence provides a
	first quantitative measure of the frequency of the factors responsible for
	changing the sign of \(CR_k(m)\). We emphasize that this statistic alone does
	not compare the total positive and negative \emph{weights}; such a comparison
	also depends on the magnitudes of the \(S_i^{(k)}\) and on the partial-sum
	denominators.
	
	In addition to the parity questions, it is natural to study the \emph{total}
	number of occurrences of parts in $\mathrm{Ev}_{>k}$ among all compositions of $n$.
	
	\begin{defn}
		For $n\ge 0$, define
		\[
		T_k(n):=\sum_{m\in\PP(n)} B(m).
		\]
		Equivalently, $T_k(n)$ counts, with multiplicity, the total number of parts
		belonging to $\mathrm{Ev}_{>k}$ across all compositions of $n$.
	\end{defn}
	
	By definition of $F(x,y)$,
	\[
	F(x,y)=\sum_{n\ge 0}\left(\sum_{m\in\PP(n)} y^{B(m)}\right)x^n,
	\]
	hence differentiating with respect to $y$ gives
	\[
	\frac{\partial}{\partial y}F(x,y)
	=\sum_{n\ge 0}\left(\sum_{m\in\PP(n)} B(m)\,y^{B(m)-1}\right)x^n.
	\]
	Specializing at $y=1$ yields the generating function of $(T_k(n))_{n\ge 0}$:
	\[
	\sum_{n\ge 0} T_k(n)\,x^n=\left.\frac{\partial}{\partial y}F(x,y)\right|_{y=1}.
	\]
	
	\begin{prop}\label{prop:Tk_genfun}
		With $\delta\in\{1,2\}$ as in Proposition~\ref{prop:sum_even_gt_k}, one has
		\[
		\sum_{n\ge 0} T_k(n)\,x^n
		=\left.\frac{\partial}{\partial y}F(x,y)\right|_{y=1}
		=\frac{x^{k+\delta}(1-x)^2}{(1-2x)^2(1-x^2)}.
		\]
		Equivalently,
		\[
		\sum_{n\ge 0} T_k(n)\,x^n
		=\frac{x^{k+\delta}(1-x)}{(1-2x)^2(1+x)}.
		\]
	\end{prop}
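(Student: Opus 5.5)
The plan is to differentiate the closed form of Proposition~\ref{prop:Fxy_rational} with respect to \(y\) and then specialize at \(y=1\).

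First I introduce abbreviations. Put
\[
S(x):=\sum_{j\ge1}x^j=\frac{x}{1-x},
\qquad
U(x):=\sum_{j\in\mathrm{Ev}_{>k}}x^j=\frac{x^{k+\delta}}{1-x^2},
\]
where the second identity is Proposition~\ref{prop:sum_even_gt_k}. Then \(F(x,y)=\bigl(1-S(x)-(y-1)U(x)\bigr)^{-1}\).

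The first step is to justify the identity \(\sum_n T_k(n)x^n=\partial_yF|_{y=1}\). This was already derived in the text just before the statement. It holds coefficientwise because each \(f_n(y)\) is a polynomial in \(y\), so termwise differentiation in \(\Z[y][[x]]\) is legitimate.

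The second step is to differentiate the reciprocal formally. In \(\Z[y][[x]]\), the derivative of \(1/D\) is \(-D'/D^2\), with \(D=1-S-(y-1)U\) and \(\partial_yD=-U\). This gives
\[
\frac{\partial}{\partial y}F(x,y)=\frac{U(x)}{\bigl(1-S(x)-(y-1)U(x)\bigr)^2}.
\]
To stay purely formal, I can check this from \(F\cdot D=1\) by differentiating: \(F_yD-FU=0\), so \(F_y=FU/D=UF^2\).

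The third step sets \(y=1\), which gives \(U(x)F(x,1)^2\). Since \(1-S=(1-2x)/(1-x)\), we get \(F(x,1)=(1-x)/(1-2x)\), as already noted in Section~\ref{sec:parity}. Substituting gives
\[
\frac{x^{k+\delta}}{1-x^2}\cdot\frac{(1-x)^2}{(1-2x)^2}.
\]
This is the first displayed form. Cancelling one factor \(1-x\) against \(1-x^2=(1-x)(1+x)\) yields the second form.

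There is no real obstacle. The only point needing care is that specializing \(y=1\) commutes with differentiation and coefficient extraction. This is clear because every coefficient of \(x^n\) is a polynomial in \(y\).

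As a sanity check, I would confirm via Corollary~\ref{cor:Fxy_explicit} that differentiating \((1-x^2)/\bigl((1-2x)(1+x)-(y-1)x^{k+\delta}\bigr)\) at \(y=1\) gives the same \((1-x^2)x^{k+\delta}/\bigl((1-2x)^2(1+x)^2\bigr)\).
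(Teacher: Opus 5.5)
Your proposal is correct and follows essentially the same route as the paper: differentiate the closed form of $F(x,y)$ with respect to $y$ and specialize at $y=1$. The only cosmetic difference is the starting form. You use the unsimplified expression of Proposition~\ref{prop:Fxy_rational} together with the identity $\partial_y F=F^2\sum_{j\in\mathrm{Ev}_{>k}}x^j$, which lands directly on the first displayed expression. The paper instead differentiates the simplified form of Corollary~\ref{cor:Fxy_explicit} and then factors $1-x-2x^2=(1-2x)(1+x)$.
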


	\begin{proof}
		From Corollary~\ref{cor:Fxy_explicit},
		\[
		F(x,y)=
		\frac{1-x^2}
		{1-x-2x^2-(y-1)x^{k+\delta}}.
		\]
		Differentiating with respect to \(y\) gives
		\[
		\frac{\partial}{\partial y}F(x,y)
		=
		\frac{(1-x^2)x^{k+\delta}}
		{\bigl(1-x-2x^2-(y-1)x^{k+\delta}\bigr)^2}.
		\]
		At \(y=1\),
		\[
		\left.\frac{\partial}{\partial y}F(x,y)\right|_{y=1}
		=
		\frac{x^{k+\delta}(1-x^2)}
		{(1-x-2x^2)^2}.
		\]
		Since \(1-x-2x^2=(1-2x)(1+x)\), this simplifies to
		\[
		\boxed{
			\sum_{n\ge0}T_k(n)x^n
			=
			\frac{x^{k+\delta}(1-x)}
			{(1-2x)^2(1+x)}.
		}
		\]
	\end{proof}
	
	\subsubsection*{Exact closed form for $T_k(n)$}
	
	Set
	\[
	s_k:=k+\delta,
	\]
	so that \(s_k\) is the smallest even integer strictly larger than \(k\).
	From Proposition~\ref{prop:Tk_genfun},
	\[
	\sum_{n\ge0}T_k(n)x^n
	=
	x^{s_k}\frac{1-x}{(1-2x)^2(1+x)}.
	\]
	The remaining rational factor has the elementary decomposition
	\[
	\frac{1-x}{(1-2x)^2(1+x)}
	=
	\frac{1}{3(1-2x)^2}
	+\frac{4}{9(1-2x)}
	+\frac{2}{9(1+x)}.
	\]
	Therefore the coefficients can be written explicitly, with no unspecified
	constants.
	
	\begin{thm}\label{thm:Tk_exact}
		Let \(s_k=k+\delta\). Then
		\[
		T_k(n)=0\qquad (0\le n<s_k),
		\]
		whereas, for \(n\ge s_k\),
		\[
		\boxed{
			T_k(n)
			=
			\frac{(3(n-s_k)+7)\,2^{\,n-s_k}
				+2(-1)^{\,n-s_k}}{9}.
		}
		\]
	\end{thm}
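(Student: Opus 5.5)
The plan is to read off coefficients directly from Proposition~\ref{prop:Tk_genfun}, written as
\[
\sum_{n\ge0}T_k(n)x^n=x^{s_k}\,G(x),\qquad G(x):=\frac{1-x}{(1-2x)^2(1+x)}.
\]
Since $G$ is a formal power series in $x$, multiplying by $x^{s_k}$ only shifts indices. Hence $T_k(n)=0$ for $0\le n<s_k$, and $T_k(n)=[x^{n-s_k}]G(x)$ for $n\ge s_k$. This already gives the first assertion. It also reduces the second to showing that, for every $N\ge0$,
\[
[x^N]G(x)=\frac{(3N+7)2^N+2(-1)^N}{9},
\]
after which we substitute $N=n-s_k$.

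The first step is to verify the partial-fraction decomposition displayed before the statement. Put the right-hand side over the common denominator $(1-2x)^2(1+x)$. The numerator becomes
\[
3(1+x)+4(1-2x)(1+x)+2(1-2x)^2 .
\]
Expanding, the constant term is $3+4+2=9$, the $x$-coefficient is $3-4-8=-9$, and the $x^2$-coefficient is $-8+8=0$. So the numerator is $9(1-x)$, and dividing by the factor $9$ in the denominators confirms the decomposition. This is the only real computation, and I expect it to be the main (if modest) obstacle, since an arithmetic slip here would propagate into the final formula.

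The second step extracts coefficients term by term using the standard expansions
\[
\frac{1}{(1-2x)^2}=\sum_{N\ge0}(N+1)2^Nx^N,\qquad
\frac{1}{1-2x}=\sum_{N\ge0}2^Nx^N,\qquad
\frac{1}{1+x}=\sum_{N\ge0}(-1)^Nx^N.
\]
These give
\[
[x^N]G=\frac{(N+1)2^N}{3}+\frac{4\cdot2^N}{9}+\frac{2(-1)^N}{9}=\frac{(3N+3+4)2^N+2(-1)^N}{9},
\]
which is exactly the claimed expression.

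As a sanity check, at $N=0$ the formula gives $(7+2)/9=1$. This matches $T_k(s_k)=1$, since the only composition of $s_k$ with a part in $\mathrm{Ev}_{>k}$ is the one-part composition $(s_k)$.
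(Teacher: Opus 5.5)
Your proposal is correct and follows the paper's proof: shift by $x^{s_k}$, use the partial-fraction decomposition of $(1-x)/\bigl((1-2x)^2(1+x)\bigr)$, and read off coefficients from the three standard expansions. You also check the decomposition explicitly, which the paper only states, and you spell out why $T_k(n)=0$ for $n<s_k$; both checks are correct.
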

	
	\begin{proof}
		For \(m\ge0\),
		\[
		[x^m]\frac{1}{(1-2x)^2}=(m+1)2^m,\qquad
		[x^m]\frac{1}{1-2x}=2^m,\qquad
		[x^m]\frac{1}{1+x}=(-1)^m.
		\]
		Thus
		\[
		[x^m]\frac{1-x}{(1-2x)^2(1+x)}
		=
		\frac{(m+1)2^m}{3}
		+\frac{4\,2^m}{9}
		+\frac{2(-1)^m}{9}
		=
		\frac{(3m+7)2^m+2(-1)^m}{9}.
		\]
		The factor \(x^{s_k}\) shifts the coefficient by \(s_k\), which gives the
		stated formula.
	\end{proof}

	\begin{rem}
		
		For $n\ge 1$, the total number of compositions of $n$ is $2^{n-1}$. Therefore the
		average number of large even parts among all compositions of $n$ is
		\[
		\mathbb{E}_k(n):=\frac{T_k(n)}{2^{n-1}}.
		\]
	\end{rem}
	
	\medskip
	
	The numerical values are more informative when the three specializations are
	viewed together rather than as isolated large integers. Table~\ref{tab:combined}
	illustrates the dependence on both \(n\) and the threshold \(k\). We also list
	the normalized sign imbalance
	\[
	\Delta_k(n):=\frac{E_k(n)-O_k(n)}{2^{n-1}},
	\]
	which, by Corollary~\ref{cor:arithmetic_sign_imbalance}, is the normalized
	numerical imbalance between the two arithmetic sign classes, up to the factor
	\((-1)^n\).
	
	\begin{table}[ht]
		\centering
		\small
		\begin{tabular}{c|c|r|r|r|r|c|c}
			\(k\)&\(n\)&\(c_k(n)\)&\(E_k(n)\)&\(O_k(n)\)&\(T_k(n)\)&
			\(\mathbb E_k(n)\)&\(\Delta_k(n)\)\\
			\hline
			4  &20&438\,540&441\,960&82\,328&89\,202&0.17014&0.68594\\
			4  &25&13\,280\,482&13\,504\,224&3\,272\,992&3\,728\,270&0.22222&0.60983\\
			4  &30&402\,178\,281&413\,970\,208&122\,900\,704&147\,266\,674&0.27431&0.54216\\
			\hline
			8  &20&520\,079&520\,080&4\,208&4\,210&0.00803&0.98395\\
			8  &25&16\,588\,117&16\,588\,344&188\,872&189\,326&0.01128&0.97748\\
			8  &30&529\,084\,264&529\,103\,680&7\,767\,232&7\,806\,066&0.01454&0.97106\\
			\hline
			12 &20&524\,110&524\,110&178&178&0.00034&0.99932\\
			12 &25&16\,768\,114&16\,768\,114&9\,102&9\,102&0.00054&0.99891\\
			12 &30&536\,470\,425&536\,470\,436&400\,476&400\,498&0.00075&0.99851
		\end{tabular}
		\caption{Joint numerical behaviour of the three principal specializations.}
		\label{tab:combined}
	\end{table}
	
	For fixed \(k\), the average number \(\mathbb E_k(n)\) of large even parts
	increases with \(n\), while the normalized parity imbalance \(\Delta_k(n)\)
	decreases. Conversely, for fixed \(n\), increasing \(k\) makes large even
	parts rarer: \(c_k(n)\) approaches the total number \(2^{n-1}\),
	\(\mathbb E_k(n)\) becomes small, and the even-parity class becomes strongly
	dominant. In particular, the former isolated example \(k=12,n=30\) is now
	seen as part of a systematic trend rather than as a single numerical
	calculation.
	
	\section{Positional statistics: Late appearance of large even parts}\label{sec:positional}
	
	The two positional enumerations below are derived from the same decomposition:
	one first chooses the initial block of parts that avoid \(\mathrm{Ev}_{>k}\),
	and then marks the first admissible large even part.  We therefore emphasize
	the combinatorial decomposition and keep the subsequent algebraic
	simplifications as short as possible.  The purpose of the formulas is to
	quantify precisely the exceptional configurations in which the first
	sign-changing even factor occurs late.
	
	The need for positional information is already visible in the arithmetic
	weight
	\[
	CR_k(m)=\prod_{s=1}^{r}
	\frac{S_{m_s}^{(k)}}{m_1+\cdots+m_s}.
	\]
	Unlike the sign, which is determined by \(B(m)\) alone, the absolute weight
	depends on the successive partial sums. If a large even part \(m_i\) is to be
	modified in an attempt to compare contributions of opposite signs, the
	preceding quantity
	\[
	D=m_1+\cdots+m_{i-1}
	\]
	enters the corresponding denominator estimates. Hence the location of the
	first large even part is not merely an additional refinement of the counting
	problem: it records information that is lost by \(B(m)\) but is relevant to
	the weighted coefficient expansion.
	
	In this section we study a positional statistic: not \emph{how many} large even
	parts occur, but \emph{how late} the first such part appears in the sequence of parts.
	
	Recall that
	\[
	\mathrm{Ev}_{>k}=\{j\ge 1:\ j \text{ even and } j>k\},
	\qquad
	\delta=
	\begin{cases}
		1,& k \text{ odd},\\
		2,& k \text{ even},
	\end{cases}
	\qquad
	\sum_{j\in \mathrm{Ev}_{>k}}x^j=\frac{x^{k+\delta}}{1-x^2}.
	\]
	Let
	\[
	A:=\Z_{>0}\setminus \mathrm{Ev}_{>k}
	\]
	be the set of \emph{allowed} parts that are \emph{not} large even.
	
	\medskip
	
	\paragraph{Generating functions for allowed and forbidden parts.}
	Set
	\[
	U(x):=\sum_{j\ge 1}x^j=\frac{x}{1-x},
	\qquad
	B(x):=\sum_{j\in \mathrm{Ev}_{>k}}x^j=\frac{x^{k+\delta}}{1-x^2},
	\qquad
	S(x):=\sum_{j\in A}x^j=U(x)-B(x).
	\]
	Thus $S(x)$ is the generating function of a single part that is \emph{not} in
	$\mathrm{Ev}_{>k}$.
	
	A direct simplification using $1-x^2=(1-x)(1+x)$ gives the convenient rational form
	\[
	S(x)=\frac{x}{1-x}-\frac{x^{k+\delta}}{1-x^2}
	=\frac{x(1+x)-x^{k+\delta}}{(1-x)(1+x)}.
	\]
	
	\medskip
	
	\paragraph{Compositions with no early large even part.}
	Fix an integer \(\ell\ge0\). We define
	\[
	L_{k,\ell}(n):=
	\#\Bigl\{m=(m_1,\dots,m_r)\in\PP(n):
	m_i\notin\mathrm{Ev}_{>k}\ \text{for every }1\le i\le\min\{\ell,r\}\Bigr\}.
	\]
	Thus \(L_{k,\ell}(n)\) counts compositions for which no large even part occurs
	among the first \(\ell\) positions. This convention also includes compositions
	having fewer than \(\ell\) parts; for such compositions, all existing parts
	must avoid \(\mathrm{Ev}_{>k}\). Equivalently, either the first large even part
	occurs at position at least \(\ell+1\), or no large even part occurs at all.
	
	\begin{prop}\label{prop:Lkl_GF}
		For every fixed \(\ell\ge0\),
		\[
		\boxed{
			\sum_{n\ge0}L_{k,\ell}(n)x^n
			=
			\sum_{r=0}^{\ell-1}S(x)^r
			+
			S(x)^\ell\frac{1}{1-U(x)},
		}
		\]
		where the finite sum is empty when \(\ell=0\).
	\end{prop}
	
	\begin{proof}
		If a composition has \(r<\ell\) parts, all of them must lie in the allowed set
		\(A\), giving the contribution \(S(x)^r\). If it has at least \(\ell\) parts,
		the first \(\ell\) parts must lie in \(A\), contributing \(S(x)^\ell\), while
		the remaining tail is an arbitrary (possibly empty) sequence of unrestricted
		parts, whose generating function is \(1/(1-U(x))\). Summing the two cases gives
		the formula.
	\end{proof}
	
	For later coefficient extraction it is useful to write this as a single
	rational function. Put
	\[
	A_k(x):=x(1+x)-x^{k+\delta},
	\qquad
	D(x):=(1-x)(1+x).
	\]
	Since \(S(x)=A_k(x)/D(x)\) and
	\(1/(1-U(x))=(1-x)/(1-2x)\), one obtains
	\begin{equation}\label{eq:Lkl_rational}
		\sum_{n\ge0}L_{k,\ell}(n)x^n
		=
		\frac{P^{L}_{k,\ell}(x)}
		{(1-2x)(1-x)^\ell(1+x)^\ell},
	\end{equation}
	where
	\[
	P^{L}_{k,\ell}(x)
	=
	(1-2x)\sum_{r=0}^{\ell-1}
	A_k(x)^rD(x)^{\ell-r}
	+
	A_k(x)^\ell(1-x).
	\]
	For \(\ell=0\), this reduces to the classical generating function
	\((1-x)/(1-2x)\) for all compositions.
	
	\medskip
	
	\paragraph{First large even part at position exactly \(\ell+1\).}
	Define, for \(\ell\ge0\),
	\[
	F_{k,\ell}(n):=
	\#\Bigl\{m=(m_1,\dots,m_r)\in\PP(n):
	m_1,\dots,m_\ell\notin\mathrm{Ev}_{>k},
	\quad m_{\ell+1}\in\mathrm{Ev}_{>k}\Bigr\}.
	\]
	Thus \(F_{k,\ell}(n)\) counts compositions whose first large even part occurs
	exactly at position \(\ell+1\).
	
	\begin{prop}\label{prop:Fkl_GF}
		For every fixed \(\ell\ge0\),
		\[
		\sum_{n\ge0}F_{k,\ell}(n)x^n
		=
		S(x)^\ell B(x)\frac{1}{1-U(x)}.
		\]
		Equivalently,
		\begin{equation}\label{eq:Fkl_rational}
			\boxed{
				\sum_{n\ge0}F_{k,\ell}(n)x^n
				=
				\frac{x^{k+\delta}A_k(x)^\ell}
				{(1-2x)(1-x)^\ell(1+x)^{\ell+1}}.
			}
		\end{equation}
	\end{prop}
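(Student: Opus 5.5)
The plan is to use the same first-occurrence decomposition as in Proposition~\ref{prop:Lkl_GF}, this time forcing a large even part at position \(\ell+1\). A composition counted by \(F_{k,\ell}(n)\) has at least \(\ell+1\) parts, so it splits uniquely into three pieces. The first is the initial block \((m_1,\dots,m_\ell)\), whose parts all lie in the allowed set \(A\). The second is the single part \(m_{\ell+1}\in\mathrm{Ev}_{>k}\). The third is the tail \((m_{\ell+2},\dots,m_r)\), which is an arbitrary, possibly empty, composition. Conversely, any such triple concatenates to a composition whose first large even part sits exactly at position \(\ell+1\). The map is therefore a bijection, and it is additive in size.

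By the product rule, the three pieces contribute the following factors:
\begin{itemize}
\item the block of \(\ell\) parts from \(A\) contributes \(S(x)^\ell\), since each part independently has generating function \(S(x)\);
\item the marked part contributes \(B(x)\);
\item the tail contributes \(\sum_{r\ge0}U(x)^r=1/(1-U(x))\), exactly as in the proof of Proposition~\ref{prop:Fxy_rational} with \(y=1\).
\end{itemize}
Multiplying these gives the first displayed identity. The case \(\ell=0\) is included, with \(S(x)^0=1\).

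For the boxed rational form, substitute the expressions already computed:
\[
S(x)=\frac{A_k(x)}{(1-x)(1+x)},\qquad
B(x)=\frac{x^{k+\delta}}{(1-x)(1+x)},\qquad
\frac{1}{1-U(x)}=\frac{1-x}{1-2x}.
\]
The product is then
\[
\frac{A_k(x)^\ell}{(1-x)^\ell(1+x)^\ell}\cdot\frac{x^{k+\delta}}{(1-x)(1+x)}\cdot\frac{1-x}{1-2x}.
\]
The factor \(1-x\) in the numerator cancels one factor of \(1-x\) coming from \(B(x)\). This leaves \((1-x)^\ell(1+x)^{\ell+1}(1-2x)\) in the denominator, which is \eqref{eq:Fkl_rational}.

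There is no real obstacle. The only point needing care is that the decomposition is unique, which holds because position \(\ell+1\) is fixed in advance rather than determined by the composition. The algebraic step is just the single cancellation of \(1-x\) noted above.
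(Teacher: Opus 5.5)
Your proposal is correct and follows the same route as the paper. You use the same three-piece decomposition (\(\ell\) allowed parts, one forced part in \(\mathrm{Ev}_{>k}\), an unrestricted tail) to get \(S(x)^\ell B(x)/(1-U(x))\), then substitute the rational forms with the single cancellation of \(1-x\); you simply spell out the bijection and the algebra in more detail than the paper does.
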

	
	\begin{proof}
		The first \(\ell\) parts must be allowed, the next part is forced to lie in
		\(\mathrm{Ev}_{>k}\), and the remaining tail is unrestricted. Hence the
		generating function is \(S(x)^\ell B(x)/(1-U(x))\). Substituting the rational
		forms of \(S\), \(B\), and \(1/(1-U)\) gives
		\eqref{eq:Fkl_rational}.
	\end{proof}
	
	\medskip
	
	\paragraph{Late appearance with existence.}
	Because \(L_{k,\ell}(n)\) includes the case in which no large even part occurs,
	while \(c_k(n)\) counts exactly those compositions, we have the exact identity
	\[
	\#\Bigl\{m\in\PP(n):
	\text{the first large even part exists and occurs at position }\ge\ell+1
	\Bigr\}
	=
	L_{k,\ell}(n)-c_k(n).
	\]
	
	\subsection{A coefficient-extraction lemma for the positional statistics}
	
	The two rational functions
	\eqref{eq:Lkl_rational} and \eqref{eq:Fkl_rational} have poles only at
	\(x=\tfrac12,1,-1\), but the poles at \(1\) and \(-1\) generally have higher
	multiplicity. Moreover, the displayed rational functions need not be proper.
	The following elementary lemma records the exact coefficient form in a way
	that handles both issues.
	
	\begin{lem}\label{lem:repeated_poles}
		Let \(a,b\ge0\), let \(P(x)\in\mathbb C[x]\), and set
		\[
		G(x)=\frac{P(x)}
		{(1-2x)(1-x)^a(1+x)^b}.
		\]
		Let \(H(x)\) denote the polynomial part in the Euclidean division of the
		numerator by the denominator. Then, for every \(n\ge0\),
		\[
		\boxed{
			[x^n]G(x)
			=
			[x^n]H(x)
			+
			A\,2^n
			+
			\sum_{j=1}^{a}B_j\binom{n+j-1}{j-1}
			+
			(-1)^n\sum_{j=1}^{b}C_j\binom{n+j-1}{j-1}.
		}
		\]
		The sums are empty when \(a=0\) or \(b=0\). The constants are explicit:
		\[
		A=
		\left.
		(1-2x)G(x)
		\right|_{x=1/2}
		=
		\frac{P(1/2)}
		{(1/2)^a(3/2)^b},
		\]
		and, with \(r=a-j\),
		\[
		B_j
		=
		\frac{(-1)^r}{r!}
		\left.
		\frac{d^r}{dx^r}
		\left(
		\frac{P(x)}{(1-2x)(1+x)^b}
		\right)
		\right|_{x=1},
		\qquad 1\le j\le a,
		\]
		while, with \(r=b-j\),
		\[
		C_j
		=
		\frac{1}{r!}
		\left.
		\frac{d^r}{dx^r}
		\left(
		\frac{P(x)}{(1-2x)(1-x)^a}
		\right)
		\right|_{x=-1},
		\qquad 1\le j\le b.
		\]
		In particular, once \(n>\deg H\), the polynomial contribution
		\([x^n]H(x)\) vanishes.
	\end{lem}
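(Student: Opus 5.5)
The plan is to reduce to the proper case and then apply the uniqueness of partial-fraction decompositions over \(\C\), reading off each coefficient from a local Taylor expansion at the corresponding pole. First, Euclidean division in \(\C[x]\) gives \(P(x)=H(x)\,(1-2x)(1-x)^a(1+x)^b+\widetilde P(x)\) with \(\deg\widetilde P<a+b+1\). Hence \(G=H+R\), where \(R=\widetilde P/\bigl((1-2x)(1-x)^a(1+x)^b\bigr)\) is proper. Since \(H\) is a polynomial, \([x^n]H=0\) for \(n>\deg H\), which gives the final sentence of the statement. The denominator has the pairwise coprime factors \(1-2x\), \((1-x)^a\), \((1+x)^b\). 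By the standard partial-fraction theorem, \(R\) can therefore be written uniquely as
\[
R(x)=\frac{A}{1-2x}+\sum_{j=1}^a\frac{B_j}{(1-x)^j}+\sum_{j=1}^b\frac{C_j}{(1+x)^j}.
\]
The coefficient formula then follows termwise from the elementary expansions \([x^n](1-2x)^{-1}=2^n\), \([x^n](1-x)^{-j}=\binom{n+j-1}{j-1}\) and \([x^n](1+x)^{-j}=(-1)^n\binom{n+j-1}{j-1}\).

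It remains to identify the constants, and here I would work with \(G\) itself rather than \(R\). For \(A\), multiply by \(1-2x\) and set \(x=1/2\). The terms coming from \(H\) and from the other poles vanish there, so \(A=P(1/2)/\bigl((1/2)^a(3/2)^b\bigr)\), since \(1-x=1/2\) and \(1+x=3/2\) at \(x=1/2\). For the \(B_j\), set \(\varphi(x)=P(x)/\bigl((1-2x)(1+x)^b\bigr)\), which is holomorphic near \(x=1\), so that \(G=\varphi\,(1-x)^{-a}\). Taylor expansion gives
\[
\varphi(x)=\sum_{r\ge0}\frac{\varphi^{(r)}(1)}{r!}(x-1)^r=\sum_{r\ge0}\frac{(-1)^r\varphi^{(r)}(1)}{r!}(1-x)^r,
\]
and therefore
\[
G(x)=\sum_{r\ge0}\frac{(-1)^r\varphi^{(r)}(1)}{r!}(1-x)^{r-a}.
\]
The principal part of \(G\) at \(1\) consists of the terms with \(r<a\), that is, of \((1-x)^{-j}\) with \(j=a-r\). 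These terms carry exactly the stated coefficient \(B_j\).

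This principal part coincides with the one in the decomposition of \(R\). Indeed, \(H\) contributes nothing singular, and the remaining partial fractions are holomorphic at \(1\). By uniqueness of Laurent principal parts, the \(B_j\) in the decomposition equal the stated values. The computation at \(x=-1\) is identical with \(\psi(x)=P(x)/\bigl((1-2x)(1-x)^a\bigr)\). Here \((x+1)^r\) already has the right sign, so no factor \((-1)^r\) appears, which matches the stated \(C_j\).

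The only point needing care is the step of matching the local principal parts of \(G\), which may be improper, with the global decomposition of the proper part \(R\). I would handle it exactly as above: a polynomial has no principal part at any finite point, so subtracting \(H\) does not change the local expansions at \(1/2\), \(1\), \(-1\). Everything else is routine bookkeeping of signs and binomial coefficients.
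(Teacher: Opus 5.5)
Your proposal is correct and follows the paper's own proof. Both arguments use Euclidean division, partial fractions for the proper part, termwise coefficient extraction, and identify $B_j$ and $C_j$ through the Taylor coefficients of $(1-x)^aG$ at $x=1$ and $(1+x)^bG$ at $x=-1$; your principal-part matching is exactly the paper's ``multiply by $(1-x)^a$ and differentiate $r$ times'' step, and your observation that $H$ has no principal part is the paper's remark that the polynomial part does not affect derivatives below the pole multiplicities.
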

	
	\begin{proof}
		After Euclidean division, the proper rational part has the partial-fraction
		decomposition
		\[
		\frac{A}{1-2x}
		+
		\sum_{j=1}^{a}\frac{B_j}{(1-x)^j}
		+
		\sum_{j=1}^{b}\frac{C_j}{(1+x)^j}.
		\]
		The coefficient formula follows from
		\[
		[x^n](1-2x)^{-1}=2^n,\qquad
		[x^n](1-x)^{-j}=\binom{n+j-1}{j-1},
		\]
		and
		\[
		[x^n](1+x)^{-j}=(-1)^n\binom{n+j-1}{j-1}.
		\]
		The formula for \(A\) is obtained by multiplying by \(1-2x\) and evaluating
		at \(x=\tfrac12\).
		
		For the pole at \(x=1\), multiply the decomposition by \((1-x)^a\).
		The term containing \(B_j\) becomes
		\(B_j(1-x)^{a-j}\). Taking the derivative of order \(r=a-j\) at \(x=1\)
		isolates this coefficient and gives
		\[
		B_j=\frac{(-1)^r}{r!}
		\left[
		\frac{d^r}{dx^r}\bigl((1-x)^aG(x)\bigr)
		\right]_{x=1}.
		\]
		Since \((1-x)^aG(x)=P(x)/((1-2x)(1+x)^b)\), this is the displayed formula.
		The argument at \(x=-1\) is identical, with no alternating derivative sign.
		The polynomial part contributes nothing to these derivatives of orders below
		the pole multiplicities, and it remains explicitly as \([x^n]H(x)\).
	\end{proof}
	
	\subsection{Explicit formulas for \(L_{k,\ell}(n)\) and \(F_{k,\ell}(n)\)}
	
	Applying Lemma~\ref{lem:repeated_poles} to
	\eqref{eq:Lkl_rational} with \(a=b=\ell\) gives the following exact form.
	
	\begin{prop}\label{prop:Lkl_partial_fraction_shape}
		For every \(\ell\ge0\), let \(H^L_{k,\ell}(x)\) be the polynomial part of
		\[
		\frac{P^L_{k,\ell}(x)}
		{(1-2x)(1-x)^\ell(1+x)^\ell}.
		\]
		Then, for every \(n\ge0\),
		\[
		\boxed{
			L_{k,\ell}(n)
			=
			[x^n]H^L_{k,\ell}(x)
			+
			A^L_{k,\ell}2^n
			+
			\sum_{j=1}^{\ell}
			B^L_{k,\ell,j}\binom{n+j-1}{j-1}
			+
			(-1)^n\sum_{j=1}^{\ell}
			C^L_{k,\ell,j}\binom{n+j-1}{j-1},
		}
		\]
		where all constants are given explicitly by the derivative formulas of
		Lemma~\ref{lem:repeated_poles} with \(P=P^L_{k,\ell}\) and \(a=b=\ell\).
		For \(n>\deg H^L_{k,\ell}\), the first term vanishes.
	\end{prop}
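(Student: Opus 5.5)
The statement follows by combining the rational form \eqref{eq:Lkl_rational} with Lemma~\ref{lem:repeated_poles}. First I will check that \eqref{eq:Lkl_rational} is correct as a formal power series identity. Proposition~\ref{prop:Lkl_GF} gives the generating function as $\sum_{r<\ell}S(x)^r+S(x)^\ell(1-x)/(1-2x)$. Substituting $S=A_k/D$ with $D=(1-x)(1+x)$ and putting everything over the common denominator $(1-2x)D(x)^\ell=(1-2x)(1-x)^\ell(1+x)^\ell$ produces the numerator
\[
(1-2x)\sum_{r=0}^{\ell-1}A_k(x)^rD(x)^{\ell-r}+A_k(x)^\ell(1-x)=P^L_{k,\ell}(x).
\]
This is pure algebra in $\C(x)$. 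Because the denominator has constant term $1$, the identity also holds in $\C[[x]]$, so the coefficients of the rational function really are $L_{k,\ell}(n)$.

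Next I will apply Lemma~\ref{lem:repeated_poles} with $P=P^L_{k,\ell}$ and $a=b=\ell$. That lemma does not require $P$ to be coprime to the denominator, and it does not require the fraction to be proper, because it splits off the polynomial part $H^L_{k,\ell}$ by Euclidean division. So the following are read off directly:
\begin{itemize}
\item the term $[x^n]H^L_{k,\ell}(x)$;
\item the coefficient $A^L_{k,\ell}=P^L_{k,\ell}(1/2)/\bigl((1/2)^\ell(3/2)^\ell\bigr)$;
\item the derivative formulas for $B^L_{k,\ell,j}$ at $x=1$ and for $C^L_{k,\ell,j}$ at $x=-1$.
\end{itemize}
Together these give the boxed expression for every $n\ge0$. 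The final sentence of the statement is the last assertion of the lemma: $[x^n]H=0$ once $n>\deg H$.

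The only point needing care is whether some pole multiplicity is actually smaller than $\ell$, for instance if $(1+x)$ divides $P^L_{k,\ell}$. That does not invalidate anything. The partial-fraction decomposition with the full exponents $a=b=\ell$ still exists (some $B_j$ or $C_j$ may simply vanish), and the derivative formulas still isolate the correct coefficients, because the proper part has the stated shape of denominator. For $\ell=0$ the sums are empty, $P^L_{k,0}=1-x$, and the formula reduces to $A2^n+[x^n]H$ with $A=1/2$ and $H=1/2$. This recovers $L_{k,0}(n)=2^{n-1}$ for $n\ge1$ and $L_{k,0}(0)=1$, which I will use as a sanity check.

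I do not expect any real obstacle. The main thing to verify is the common-denominator computation behind \eqref{eq:Lkl_rational}, in particular that the factor $(1-x)$ coming from $1/(1-U)$ combines with $D(x)^\ell$ to give exactly the stated numerator and denominator.
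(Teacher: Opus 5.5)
Your proposal is correct and follows the paper's route: the paper proves this proposition in one line, by applying Lemma~\ref{lem:repeated_poles} to \eqref{eq:Lkl_rational} with $a=b=\ell$. Your additional checks go beyond what the paper writes out, and all three are accurate: the common-denominator derivation of $P^L_{k,\ell}$, the remark that some $B_j$ or $C_j$ may vanish when a pole multiplicity drops, and the $\ell=0$ sanity check with $H=A=1/2$.
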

	
	Likewise, applying the same lemma to \eqref{eq:Fkl_rational} with
	\(a=\ell\) and \(b=\ell+1\) yields:
	
	\begin{prop}\label{prop:Fkl_partial_fraction_shape}
		For every \(\ell\ge0\), let \(H^F_{k,\ell}(x)\) be the polynomial part of
		\[
		\frac{x^{k+\delta}A_k(x)^\ell}
		{(1-2x)(1-x)^\ell(1+x)^{\ell+1}}.
		\]
		Then, for every \(n\ge0\),
		\[
		\boxed{
			F_{k,\ell}(n)
			=
			[x^n]H^F_{k,\ell}(x)
			+
			A^F_{k,\ell}2^n
			+
			\sum_{j=1}^{\ell}
			B^F_{k,\ell,j}\binom{n+j-1}{j-1}
			+
			(-1)^n\sum_{j=1}^{\ell+1}
			C^F_{k,\ell,j}\binom{n+j-1}{j-1},
		}
		\]
		where all constants are given explicitly by
		Lemma~\ref{lem:repeated_poles} with
		\[
		P(x)=x^{k+\delta}A_k(x)^\ell,\qquad
		a=\ell,\qquad b=\ell+1.
		\]
		For \(n>\deg H^F_{k,\ell}\), the polynomial contribution vanishes.
	\end{prop}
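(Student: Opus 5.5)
The plan is to apply Lemma~\ref{lem:repeated_poles} directly to the rational form \eqref{eq:Fkl_rational} from Proposition~\ref{prop:Fkl_GF}. First I check that this generating function has exactly the shape the lemma requires:
\[
G(x)=\frac{P(x)}{(1-2x)(1-x)^a(1+x)^b},\qquad P(x)=x^{k+\delta}A_k(x)^\ell,\quad a=\ell,\quad b=\ell+1,
\]
where \(A_k(x)=x(1+x)-x^{k+\delta}\) is a polynomial. So \(P\in\C[x]\), and the exponents \(a,b\) are nonnegative integers. No properness or coprimality hypothesis is needed, because the lemma first performs Euclidean division and keeps the polynomial part \(H(x)=H^F_{k,\ell}(x)\) explicitly. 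The \(n\)-th coefficient of \(\sum_n F_{k,\ell}(n)x^n\) is \(F_{k,\ell}(n)\) by definition, so the lemma's formula is exactly the boxed identity, with the ranges \(1\le j\le \ell\) and \(1\le j\le \ell+1\) read off from \(a\) and \(b\).

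Next I identify the constants. With \(P\) as above, \(A^F_{k,\ell}=P(1/2)/((1/2)^\ell(3/2)^{\ell+1})\). The constants \(B^F_{k,\ell,j}\) and \(C^F_{k,\ell,j}\) are given by the derivative formulas of Lemma~\ref{lem:repeated_poles}, evaluated at \(x=1\) and \(x=-1\) respectively. When \(\ell=0\) the sum over \(B_j\) is empty, which is consistent with the lemma's convention for \(a=0\).

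The last claim, that the polynomial contribution vanishes for \(n>\deg H^F_{k,\ell}\), is immediate because \(H^F_{k,\ell}\) is a polynomial. If the quotient is zero (the proper case), I take the degree to be \(-\infty\) or \(-1\), and the statement holds trivially.

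The only point needing any care is that some of the partial-fraction constants may vanish. For example, \(P\) could share a factor with \((1+x)\) or \((1-x)\) for particular \(k\). This does not affect validity, since Lemma~\ref{lem:repeated_poles} does not assume the multiplicities are exact. The proof is therefore a direct specialization, and I expect no genuine obstacle.
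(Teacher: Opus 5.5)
Your proposal is correct and is exactly the paper's argument: the paper obtains this proposition by applying Lemma~\ref{lem:repeated_poles} directly to the rational form \eqref{eq:Fkl_rational} with $P(x)=x^{k+\delta}A_k(x)^\ell$, $a=\ell$, $b=\ell+1$, keeping the polynomial part from the Euclidean division just as you do. Your closing caveat about possible common factors is harmless but unnecessary: $A_k(1)=1$ and $A_k(-1)=-1$ (because $k+\delta$ is even), so $P(\pm1)\neq0$ and the poles at $x=1$ and $x=-1$ have orders exactly $\ell$ and $\ell+1$.
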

	
	These formulas are exact for all \(n\) and require no assumption of simple
	roots. They are also directly effective: the polynomial parts are obtained by
	Euclidean division, while the remaining constants are finite derivatives at
	the three fixed poles \(1/2,1,-1\). In particular, no numerical root-finding
	is needed.
	
	\begin{exa}
		Take \(k=12\), so \(\delta=2\), and \(\ell=5\). Then
		\[
		\sum_{n\ge0}F_{12,5}(n)x^n
		=
		\frac{x^{14}\bigl(x(1+x)-x^{14}\bigr)^5}
		{(1-2x)(1-x)^5(1+x)^6}.
		\]
		For the late-or-none statistic, the corrected generating function is
		\[
		\sum_{n\ge0}L_{12,5}(n)x^n
		=
		\sum_{r=0}^{4}
		\left(
		\frac{x(1+x)-x^{14}}{(1-x)(1+x)}
		\right)^r
		+
		\left(
		\frac{x(1+x)-x^{14}}{(1-x)(1+x)}
		\right)^5
		\frac{1-x}{1-2x}.
		\]
		Coefficient extraction gives
		\[
		L_{12,5}(30)=536\,652\,932,
		\qquad
		F_{12,5}(30)=42\,274.
		\]
		Since \(c_{12}(30)=536\,470\,425\), the number of compositions of \(30\)
		for which a large even part exists and its first occurrence is at position at
		least \(6\) is
		\[
		L_{12,5}(30)-c_{12}(30)=182\,507.
		\]
	\end{exa}

	\section*{Conclusion and perspectives}

	In this paper, we developed a systematic generating function approach to study
	integer compositions in which the occurrence of even parts larger than a fixed
	threshold $k$ is controlled. This framework leads to explicit rational generating
	functions, linear recurrences, and exact closed formulas for several refined
	statistics, including the number of such parts, their parity, their total
	contribution, and the position of their first occurrence in a composition.
	
	One of the strengths of the method is its flexibility. The same strategy applies
	to many other families of parts in integer compositions, leading again to
	rational generating functions and to explicit enumeration formulas and
	asymptotic information. In this sense, the present work provides a general and
	flexible combinatorial framework for the study of weighted or constrained
	compositions.
	
	The four families of quantities considered here can therefore be read as
	successive combinatorial shadows of the same arithmetic problem: \(c_k(n)\)
	measures the complete absence of sign-changing large even parts,
	\(E_k(n)-O_k(n)\) measures the numerical imbalance between the two sign
	classes, \(T_k(n)\) measures the total frequency of the sign-changing factors,
	and the positional statistics retain information relevant to the partial-sum
	denominators in the weights \(CR_k(m)\). Their generating functions admit exact
	coefficient formulas involving only the fixed poles \(1/2,1,-1\), including the
	necessary polynomial terms when the rational functions are improper. The
	weighted comparison itself is a separate arithmetic problem and is deliberately
	not claimed here.
	
	As explained in the introduction, our original motivation comes from arithmetic
	questions related to zeta functions and the coefficients of $L$-polynomials of
	algebraic curves over finite fields, in particular in the context of towers of
	function fields. Rather than providing merely another route to a sign
	determination, the present approach gives finer quantitative information on the
	combinatorial structure of the terms occurring in the coefficient expansion:
	it records the absence, parity, multiplicity, and position of the large even
	parts responsible for sign changes. Such information may also be useful in the
	study of other towers whose arithmetic data exhibit a comparable sign pattern,
	and suggests a broader role for these generating-function methods in the
	analysis of \(L\)-polynomial coefficients.

	\section*{Acknowledgements}
	
	The author would like to thank Stéphane Ballet for many helpful discussions. 
	This work unexpectedly emerged from his ideas in algebraic geometry.

	\bibliographystyle{plain}
	\bibliography{Biblio_Den}

\end{document}